\numberwithin{equation}{section}
\newtheorem{theo}{Theorem}
\newtheorem{conj}{Conjecture}
\newtheorem{coro}{Corollary}
\newtheorem{prop}{Proposition}
\newtheorem{lem}{Lemma}
\newtheorem{defi}{Definition}
\newtheorem{thmx}{Theorem}
\theoremstyle{remark}
\newtheorem{Remark}{Remark}
\newcommand{\N}{\mathbb{N}}
\newcommand{\Z}{\mathbb{Z}}
\newcommand{\Q}{\mathbb{Q}}
\newcommand{\R}{\mathbb{R}}
\renewcommand{\C}{\mathbb{C}}
\newcommand{\Qbar}{\overline{\mathbb Q}}
\newcommand{\etoile}{^*}
\newcommand{\eps}{\varepsilon}
\renewcommand{\Im}{{\rm Im}}
\newcommand{\calP}{\mathcal P}
\newcommand{\trdeg}{{\rm trdeg}}
\newcommand{\rk}{{\rm rk}}
\newcommand{\tantie}{\foreignlanguage{russian}{\CYREREV}}
\newcommand{\antie}{\text{\tantie}}
\newcommand{\mixed}{{\bf M}_G}
\newcommand{\GG}{{\bf G}}
\newcommand{\VV}{{\bf V}}
\newcommand{\EE}{{\bf E}}
\newcommand{\bfS}{{\bf S}}
\newcommand{\fgo}{\mathfrak{f}} 
\begin{document}

 \selectlanguage{english}

\title{
Relations between values of arithmetic Gevrey series, 
and applications to values of the Gamma function
}
\date\today
\author{S. Fischler and T. Rivoal}
\maketitle

\begin{abstract} We investigate the relations between the rings {\bf E}, {\bf G} 
 and  {\bf D} 
of values taken at algebraic points by arithmetic Gevrey series of order either $-1$ ($E$-functions),  $0$ (analytic continuations of $G$-functions) or $1$ (renormalization of divergent series solutions at $\infty$ of $E$-operators) respectively.
We prove in particular that any element of {\bf  G} can be written as multivariate polynomial with algebraic coefficients in elements of {\bf E} and {\bf D}, and is the limit at infinity of some $E$-function along some direction. This prompts to defining and  studying the notion of mixed functions, which generalizes simultaneously $E$-functions and arithmetic Gevrey series of order~1. Using natural conjectures for arithmetic Gevrey series of order 1 and mixed functions (which are analogues of a theorem of Andr\'e and Beukers for $E$-functions) and the conjecture ${\bf D}\cap{\bf E}=\Qbar$ (but not necessarily all these conjectures at the same time), we deduce a number of interesting Diophantine results such as an analogue for mixed functions of Beukers' linear independence theorem for values of $E$-functions,  the transcendance of the values of the Gamma function and its derivatives at all non-integral algebraic numbers, the transcendance of Gompertz constant as well as the fact that Euler's constant is not in {\bf E}. 
\end{abstract}


\section{Introduction} \label{sec:intro}

A power series $\sum_{n=0}^\infty \frac{a_n}{n!} x^n \in \Qbar[[x]]$ is said to be an $E$-function when it is solution of a linear differential equation over $\Qbar(x)$ (holonomic), and $\vert \sigma(a_n)\vert$ (for any $\sigma\in \textup{Gal}(\Qbar/\Qbar)$) and the least common denominator of $a_0, a_1, \ldots, a_n$ grow at most exponentially in $n$. They were defined and studied by Siegel in 1929, who also defined the class of $G$-functions: a power series $\sum_{n=0}^\infty a_n x^n \in \Qbar[[x]]$ is said to be a $G$-function when $\sum_{n=0}^\infty \frac{a_n}{n!} x^n$ is an $E$-function. In this case,  $\sum_{n=0}^\infty n!a_n z^n\in \Qbar[[z]]$  is called an \antie-function, following the terminology introduced by Andr\'e in \cite{andre}. $E$-functions are entire, while $G$-functions have a positive radius of convergence, which is finite except for polynomials. Here and below, we see $\Qbar$ as embedded into $\C$. Following André again, $E$-functions, $G$-functions and \tantie-fonctions are exactly arithmetic Gevrey series of  order $s=-1,0,1$ respectively. Actually André defines arithmetic Gevrey series of any order $s\in\Q$, but the set of values at algebraic points is the same for a given $s\neq 0$ as for $s/|s|$ using \cite[Corollaire 1.3.2]{andre}.

{\antie}-functions are divergent series, unless they are polynomials. 
Given an \antie-function $\fgo$ and any $\theta\in\R$, except finitely many values mod $2\pi$ (namely anti-Stokes directions of $\fgo$), one can perform Ramis' 1-summation of $\fgo(1/z)$ in the direction $\theta$, which coincides in this setting with Borel-Laplace summation (see \cite{Ramis} or \cite{ateo}). This provides a function denoted by $\fgo_\theta(1/z)$, holomorphic on the open subset of $\C$ consisting in all $z\neq 0$ such that $\theta-\frac\pi 2 -\eps < \arg z <  \theta+\frac\pi 2 +\eps$ for some $\eps>0$, of which $\fgo(1/z)$ is the asymptotic expansion in this sector (called a large sector bisected by $\theta$). Of course $\fgo(1/z)$ can be extended further by analytic continuation, but this asymptotic expansion may no longer be valid. When an  \antie-function  is denoted by $\fgo_j$, we shall denote by $\fgo_{j,\theta}$ or $\fgo_{j;\theta}$ its 1-summation and we always assume (implicitly or explicitly) that $\theta $ is not an anti-Stokes direction.

In \cite{gvalues}, \cite{ateo} and \cite[\S 4.3]{fuchsien}, we have studied the sets {\bf G},  {\bf E} and {\bf D} defined respectively as the sets of all the values taken  by all (analytic continuations of) $G$-functions at algebraic points, of  all the values taken by all  $E$-functions at algebraic points and  of all values 
$\fgo_\theta(1)$ where $\fgo$ is an  \antie-function ($\theta=0$ if it is not an anti-Stokes direction, and $\theta>0$ is very small otherwise.) These three sets are countable sub-rings of $\mathbb C$ that all contain $\Qbar$; conjecturally, they are related to the set of periods and exponential periods, see \S\ref{sec:proofthmpolyGEantiE}. (The ring {\bf D} is denoted by {\bf \tantie} in \cite{fuchsien}.)

We shall prove the following result in \S\ref{sec:proofthmpolyGEantiE}.

\begin{theo}\label{theo:GpolyEantiE} Every element of ${\bf G}$ can be written as a multivariate polynomial (with coefficients in $\Qbar$)  in elements of 
{\bf E} and {\bf D}.

Moreover, ${\bf G}$ coincides with the set of all convergent integrals $\int_0^{\infty }F(x)dx$ where $F$ is an $E$-function, or equivalently with the set of all finite limits of $E$-functions at $\infty$ along some direction.
\end{theo}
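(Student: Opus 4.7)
The plan is to first establish the integral/limit characterisation ${\bf G}=\{\int_0^\infty F(x)\,dx:F\text{ an $E$-function, integral convergent along some ray}\}$, and then deduce the polynomial expression in ${\bf E}\cup{\bf D}$ by analysing the asymptotic behaviour of $F$ at infinity. The two alternative forms of the characterisation (integral versus limit) are equivalent by a short argument: antiderivatives of $E$-functions are $E$-functions (the coefficient shift preserves both the Galois and denominator bounds), so for any ray $d_\theta=e^{i\theta}\mathbb R_{>0}$ one has $\int_0^\infty F=\lim_{R\to\infty}\widetilde F(Re^{i\theta})$ with $\widetilde F(y):=\int_0^y F(x)\,dx$; conversely $\lim_{R\to\infty}G(Re^{i\theta})=G(0)+\int_0^\infty G'(se^{i\theta})\,e^{i\theta}\,ds$ for an $E$-function $G$.

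For the inclusion ${\bf G}\subseteq\{\int_0^\infty F\}$, I would start with the classical Laplace identity connecting $G$- and $E$-functions. If $g(z)=\sum_n a_n z^n$ is a $G$-function and $F(t):=\sum_n(a_n/n!)t^n$ the associated $E$-function, then comparing Taylor coefficients gives
$$\alpha\,g(\alpha)=\int_0^\infty F(t)\,e^{-t/\alpha}\,dt$$
whenever $\mathrm{Re}(1/\alpha)$ exceeds the exponential type of $F$. The integrand is an $E$-function of $t$, so $g(\alpha)\in\{\int_0^\infty F\}$ for $\alpha$ in some open set. To extend this to arbitrary algebraic $\alpha$ on the Riemann surface of $g$, I would combine contour rotation (to a ray $e^{i\theta}\mathbb R_{>0}$ for an appropriate $\theta$ avoiding anti-Stokes directions, which enlarges the range of valid $\alpha$) with closure properties of the class $\{\int_0^\infty F\}$: namely closure under sums (trivial) and products (since $E$-functions are closed under convolution and $(\int_0^\infty F_1)(\int_0^\infty F_2)=\int_0^\infty(F_1\star F_2)$ by Fubini), together with the fact that $g(\alpha)$ can be computed from the defining linear ODE of $g$ by a finite algebraic operation on $G$-function values at nearby points.

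For the reverse inclusion $\{\int_0^\infty F\}\subseteq{\bf G}$ and, simultaneously, the polynomial expression in ${\bf E}\cup{\bf D}$, I would invoke Andr\'e's structure theorem for the asymptotic expansion of an $E$-function at infinity in a sector $S$ bounded by anti-Stokes directions:
$$F(x)\;\sim\;\sum_{i} e^{\rho_i x}\,x^{\lambda_i}\,\varphi_i(1/x)\quad(x\to\infty\text{ in }S),$$
with algebraic $\rho_i,\lambda_i$ and \antie-functions $\varphi_i$. Given $\int_0^\infty F(x)\,dx$ convergent along a non-anti-Stokes direction $\theta\in S$, I would split the integral at some algebraic $R>0$: the head $\int_0^R F(x)\,dx=\widetilde F(R)-\widetilde F(0)$ lies in ${\bf E}$, while each tail $\int_R^{\infty e^{i\theta}}e^{\rho_i x}x^{\lambda_i}\varphi_i(1/x)\,dx$ is identified, via a change of variable of the form $u=-\rho_i(x-R)$ followed by elementary reduction of the factor $x^{\lambda_i}$, with a Borel--Laplace $\theta$-sum of an explicit \antie-function evaluated at an algebraic point, hence with an element of $\overline{\mathbb Q}\cdot{\bf D}$. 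Summing the contributions expresses $\int_0^\infty F$ as a $\overline{\mathbb Q}$-linear (in particular polynomial) combination of elements of ${\bf E}$ and ${\bf D}$; together with the first inclusion, this also places it in ${\bf G}$.

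The main obstacle is the identification of each tail integral with an explicit Borel--Laplace $\theta$-sum of an \antie-function in the last step: one has to match the direction $\theta$ of integration with a non-anti-Stokes direction of the corresponding \antie-function, handle the algebraic growth factor $x^{\lambda_i}$ (via integration by parts or Gamma-function identities), and ensure that all contour rotations avoid the singular loci involved. A secondary difficulty is the extension of the Laplace representation of $g(\alpha)$ from a neighbourhood of $0$ to the whole Riemann surface, which is where the convolution argument plays its role.
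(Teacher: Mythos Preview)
Your proposal has two substantive gaps, one in each half of the argument.

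\medskip

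\textbf{For the inclusion $\mathbf{G}\subseteq\{\int_0^\infty F\}$.} The Laplace identity gives $g(\alpha)$ only for $\alpha$ of small modulus, and your proposed extension to arbitrary algebraic points on the Riemann surface of $g$ is not an argument: closure of the class $\{\int_0^\infty F\}$ under sums and products does not by itself propagate the representation along analytic continuation, and the phrase ``finite algebraic operation on $G$-function values at nearby points'' hides the whole difficulty. The paper bypasses this entirely by invoking the (non-trivial) fact, proved in \cite{gvalues}, that every element of $\mathbf{G}$ is $g(1)$ for some $G$-function $g$ whose radius of convergence exceeds $1$; then the Laplace formula applies directly with $\alpha=1$, and $\beta=\int_0^{+\infty}e^{-x}F(x)\,dx$ with $e^{-x}F(x)$ an $E$-function.

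\medskip

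\textbf{For the polynomial expression in $\mathbf{E}\cup\mathbf{D}$.} Your tail-integral scheme does not close. First, Andr\'e's expansion of an $E$-function at infinity generally involves terms $e^{\rho x}x^{\lambda}(\log x)^{k}\varphi(1/x)$ with $k\geq 1$, which you omit. Second, and more seriously, even in the log-free case the tail $\int_R^{\infty e^{i\theta}} e^{\rho x}x^{\lambda}\varphi_\theta(1/x)\,dx$ is \emph{not} an element of $\Qbar\cdot\mathbf{D}$: after $u=-\rho(x-R)$ one is left with a factor $(R-u/\rho)^{\lambda}$, and reducing it (by parts or by binomial expansion) introduces values such as $\Gamma(\lambda+m)$ with $\lambda\in\Q\setminus\Z$, which are not known to lie in the ring generated by $\mathbf{E}$ and $\mathbf{D}$ without a separate argument. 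You flag this as ``the main obstacle'', but you do not overcome it, and the naive reductions you suggest would only place the result in the larger $\mathbf{S}$-module $\mathbf{V}$. The paper proceeds quite differently: given $\xi\in\mathbf{G}$, it realises $\xi$ as a connection constant $\kappa_j$ in the expansion of a suitable $E$-function $F$ in Andr\'e's formal basis $(H_1,\ldots,H_\mu)$ at infinity (using \cite{probsiegel}), then expresses $\kappa_j$ as a ratio of Wronskian determinants evaluated at $z=1$. The denominator $\det W(H_{1,\theta},\ldots,H_{\mu,\theta})$ equals $c z^b e^{az}$ with algebraic $a,b,c$, and the numerator entries are either $F^{(j)}(1)\in\mathbf{E}$ or derivatives of the resummed $H_{i,\theta}$ at $1$, which lie in $e^{\rho_i}\cdot\mathbf{D}$; expanding the determinant yields $\xi$ as a polynomial in elements of $\mathbf{E}$ and $\mathbf{D}$. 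No integration of tails, and no premature appearance of $\Gamma$-values.

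\medskip

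Finally, for the reverse inclusion $\{\int_0^\infty F\}\subseteq\mathbf{G}$ the paper simply cites \cite[Theorem~2(iii)]{ateo} (a finite limit of an $E$-function along a ray lies in $\mathbf{G}$), whereas your sketch leaves unexplained why the constant term of the asymptotic expansion belongs to $\mathbf{G}$.
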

Above, a convergent integral $\int_0^{\infty}F(x)dx$ means a finite limit of the $E$-function $\int_0^zF(x)dx$ as $z\to \infty$ along some direction; this explains the equivalence of both statements.

We refer to  Eq.~\eqref{eq:log(2)EantiE} in \S\ref{sec:proofthmpolyGEantiE} for an expression of $\log(2)$ as a polynomial in elements in {\bf E} and~{\bf D}; the number $\pi$ could be similarly expressed by considering $z$ and $iz$ instead of $z$ and $2z$ there.  
Examples of the last statement are the identities (see \cite{zucker} for the second one):
$$
\int_0^{+\infty} \frac{\sin(x)}x dx=\frac{\pi}{2} \quad \textup{and} \quad \int_0^{+\infty}J_0(ix)e^{-3x}dx=\frac{\sqrt{6}}{96\pi^3}
\Gamma\Big(\frac{1}{24}\Big)\Gamma\Big(\frac{5}{24}\Big)\Gamma\Big(\frac{7}{24}\Big)\Gamma\Big(\frac{11}{24}\Big).
$$


It is notoriously difficult to prove/disprove that a given element of {\bf G} is transcendental; it is known that a Siegel-Shidlovskii type theorem for $G$-functions can not hold {\em mutatis mutandis}. Theorem~\ref{theo:GpolyEantiE} suggests that an alternative approach to the study of the Diophantine properties of elements of {\bf G} can be through a better understanding of joint study of the elements of {\bf E} and {\bf D}, modulo certain conjectures to begin with. Our applications will not be immediately directed to the elements of {\bf G} but rather to the understanding of the (absence of) relations between the elements of {\bf E} and {\bf D}.


 It seems natural (see \cite[p.~37]{ateo}) to conjecture that $\EE \; \cap \;{\bf G}=\Qbar$, and also that ${\bf G}\;\cap \;{\bf D}=\Qbar$, though both properties seem currently out of reach.  In this paper, we suggest (see \S \ref{ssec:mixedfunctions}) a possible approach towards the following analogous conjecture.

\begin{conj} \label{conj3} We have ${\bf E} \;\cap\;{ \bf D} =\Qbar$.
\end{conj}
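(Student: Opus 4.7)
The plan is to reduce Conjecture~\ref{conj3} to the conjectured Beukers-type linear independence theorem for mixed functions promised in \S\ref{ssec:mixedfunctions}. Suppose $\alpha\in\EE\cap{\bf D}$, so that $\alpha=F(\beta)=\fgo_\theta(1)$ for some $E$-function $F$, some $\beta\in\Qbar$, some $\antie$-function $\fgo$ and some admissible direction $\theta$. I would view $M_1(z):=F(\beta z)$ and $M_2(z):=\fgo_\theta(z)$ as two mixed functions; they satisfy the $\Qbar$-linear relation $M_1(1)-M_2(1)=0$ between their values at $z=1$, and, possibly after replacing $1$ by a nearby algebraic point and rotating $\theta$ accordingly, this point may be assumed non-singular for the underlying differential system.

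Applying the conjectured mixed-function analog of Beukers' theorem to this relation should produce polynomials $P_1,P_2\in\Qbar[z]$ with $P_1(1)=1$, $P_2(1)=-1$ and a functional identity $P_1(z)F(\beta z)+P_2(z)\fgo_\theta(z)\equiv 0$, valid on the common domain of the two functions. Rearranging yields $\fgo_\theta(z)=-\bigl(P_1(z)/P_2(z)\bigr)F(\beta z)$, so that $\fgo_\theta$ coincides with a rational multiple of an entire function and, in particular, extends meromorphically to all of $\C$. Since $\fgo_\theta(w)$ tends to the constant term $a_0$ of the asymptotic expansion $\fgo(w)=\sum_{n\geq 0}n!a_n w^n$ as $w\to 0$ inside its sector, this meromorphic extension is analytic at the origin, and its convergent Taylor series there coincides with the formal series $\fgo$. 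Hence $\fgo$ has positive radius of convergence, which for an $\antie$-function forces it to be a polynomial; consequently $\fgo_\theta=\fgo$ and $\alpha=\fgo(1)\in\Qbar$.

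The main obstacle is of course the mixed-function analog of Beukers' linear independence theorem itself. It requires first developing a robust theory of mixed functions parallel to Andr\'e's theory of $E$-operators (differential systems, singular loci, Galois groups, local structure at infinity), and then transferring to this broader setting the monodromy and local factorization arguments that underlie the $E$-function case. Once that framework is secured, the reduction above should be essentially mechanical, which is precisely why mixed functions are proposed as the natural route towards Conjecture~\ref{conj3}.
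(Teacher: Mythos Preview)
The statement is a \emph{conjecture}; the paper does not prove it unconditionally. What the paper does (in \S\ref{ssec:proofprop1}) is show that Conjecture~\ref{conj1} implies Conjecture~\ref{conj3}, via a two-line argument: given $\xi=F(1)=\fgo_\theta(1)$, form the \emph{single} mixed function $\Psi(z)=F(z)-\fgo(1/z)$, note $\Psi_\theta(1)=0$, and apply Conjecture~\ref{conj1} together with Proposition~\ref{prop1} to get $\fgo_\theta(1)\in\Qbar$ directly.

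Your route is also a reduction to Conjecture~\ref{conj1}, but an indirect one: you invoke the mixed Beukers theorem (Theorem~\ref{theomixte}), which is itself proved in the paper \emph{assuming} Conjecture~\ref{conj1}. So your ``main obstacle'' is not an open problem requiring a new theory; it is Theorem~\ref{theomixte}, already available once Conjecture~\ref{conj1} is granted. The genuine obstacle, in both approaches, is Conjecture~\ref{conj1}.

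There are also technical gaps in your argument as written. First, $M_2(z):=\fgo_\theta(z)$ is \emph{not} a mixed function in the sense of Definition~\ref{def:mixedfn}: the $\antie$-part of a mixed function in the variable $z$ must be a series in $1/z$, so you need $M_2(z)=\fgo(1/z)$ (summed as $\fgo_\theta(1/z)$). Second, to apply Theorem~\ref{theomixte} you must place $M_1,M_2$ inside a vector satisfying a first-order system, which in general forces you to adjoin derivatives; the resulting functional relation $\sum L_i(z)\Psi_i(z)=0$ then involves those derivatives, not just $M_1$ and $M_2$, so you cannot immediately write $\fgo_\theta=-(P_1/P_2)F(\beta\,\cdot)$. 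The conclusion can be salvaged---since every $M_1^{(k)}$ is a pure $E$-function and every $M_2^{(j)}$ has only non-positive powers of $z$, equating the two sides forces the $E$-combination $\sum L_k(z)M_1^{(k)}(z)$ to be a polynomial $Q(z)\in\Qbar[z]$, and evaluating at $z=1$ (where only $L_0(1)=1$ survives) gives $\alpha=M_1(1)=Q(1)\in\Qbar$---but this is already essentially the content of Proposition~\ref{prop1}, which the paper applies directly without the detour through Theorem~\ref{theomixte}.
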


In \S \ref{ssec:mixedfunctions} we shall make a functional conjecture, namely Conjecture~\ref{conj1}, that implies  Conjecture~\ref{conj3}. We also prove that  Conjecture~\ref{conj3} has very important  consequences, as the following result shows. 

\begin{theo}\label{prop3} 
 Assume that Conjecture \ref{conj3} holds. Then $\Gamma^{(s)}(a)$  is a transcendental number for any rational number $a>0$ and any integer $s\geq 0$, except of course if $s=0$ and $a\in\N$.
\end{theo}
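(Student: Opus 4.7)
The strategy is to prove, under Conjecture~\ref{conj3}, that $\Gamma^{(s)}(a)$ cannot be algebraic for $a=p/q\in\Q_{>0}$ and integer $s\geq 0$, excluding the trivial case $s=0$, $a\in\N$. I will split $\Gamma^{(s)}(a)=\int_0^{+\infty}(\log t)^s t^{a-1}e^{-t}\,dt$ at $t=1$, identifying the lower half with the value at $1$ of an explicit $E$-function and the upper half (up to the factor $e^{-1}$) with the value at $1$ of the Ramis $1$-summation of an explicit \antie-function. Conjecture~\ref{conj3} will then force this upper half to be algebraic, and the resulting $\Qbar$-linear relation between the values at $1$ of three $\Qbar(z)$-linearly independent $E$-functions will be forbidden by the Andr\'e--Beukers refinement of Siegel--Shidlovskii.

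Concretely, set
$$E_{a,s}(z)\recoit\sum_{n\geq 0}\frac{(-z)^n}{n!\,(a+n)^{s+1}},\qquad \psi_{a,s}(u)\recoit\sum_{n\geq 0}\frac{d^s}{da^s}\!\bigl[(a-1)(a-2)\cdots(a-n)\bigr]u^n.$$
The series $E_{a,s}$ is an $E$-function, annihilated by $(d/dz+1)(z\,d/dz+a)^{s+1}$, while $\psi_{a,s}$ is an \antie-function: its Borel transform $(1+u)^{a-1}(\log(1+u))^s$ is a $G$-function for rational $a$ (the product of an algebraic function and $s$ iterated logarithms). Expanding $e^{-t}$ in $\gamma(a,1)=\int_0^1 t^{a-1}e^{-t}\,dt$ and differentiating $s$ times in $a$ yields $\partial_a^s\gamma(a,1)=(-1)^s s!\,E_{a,s}(1)\in\EE$; Borel--Laplace gives $\psi_{a,s;\theta}(u)=\int_0^{+\infty}e^{-v}(1+uv)^{a-1}(\log(1+uv))^s\,dv$, and the substitution $y=1+v$ at $u=1$ identifies $\psi_{a,s;\theta}(1)=e\,\partial_a^s\Gamma(a,1)\in\mathbf{D}$. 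Hence
$$\Gamma^{(s)}(a)\;=\;(-1)^s s!\,E_{a,s}(1)\;+\;e^{-1}\,\psi_{a,s;\theta}(1).$$
Assume for contradiction $\Gamma^{(s)}(a)\in\Qbar$. Multiplying by $e\in\EE$ and using that $\EE$ is a ring gives $\psi_{a,s;\theta}(1)=e\,\Gamma^{(s)}(a)-(-1)^s s!\,e\,E_{a,s}(1)\in\EE$; together with $\psi_{a,s;\theta}(1)\in\mathbf{D}$, Conjecture~\ref{conj3} forces $\psi_{a,s;\theta}(1)=:\beta\in\Qbar$. The decomposition then reads as the nontrivial $\Qbar$-linear relation
$$(-1)^s s!\,E_{a,s}(1)\;-\;\Gamma^{(s)}(a)\cdot 1\;+\;\beta\cdot e^{-1}\;=\;0.$$

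To conclude, I will invoke the Andr\'e--Beukers refinement of Siegel--Shidlovskii at $z=1$ (a non-singular point of the ambient ODE system, whose only finite singularity is $z=0$): it suffices to check that $E_{a,s}(z)$, $1$ and $e^{-z}$ are $\Qbar(z)$-linearly independent. Laplace's method applied to $E_{a,s}(z)=\tfrac{(-1)^s}{s!}\int_0^1(\log u)^s u^{a-1}e^{-zu}\,du$ gives the asymptotic $E_{a,s}(z)\sim\tfrac{\Gamma(a)}{s!}z^{-a}(\log z)^s$ as $z\to+\infty$; a hypothetical identity $E_{a,s}(z)=R(z)+S(z)e^{-z}$ with $R,S\in\Qbar(z)$ would force the rational function $R$ to behave like $z^{-a}(\log z)^s$ up to a nonzero algebraic factor, which is impossible \emph{unless} $a\in\N$ and $s=0$---precisely the excluded case, in which indeed $E_{a,0}(z)=\Gamma(a)z^{-a}\bigl(1-e^{-z}\sum_{k=0}^{a-1}z^k/k!\bigr)\in\Qbar(z)+\Qbar(z)e^{-z}$ and $\Gamma(a)=(a-1)!$ is classically algebraic. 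In every other case, Andr\'e--Beukers will yield the $\Qbar$-linear independence of $E_{a,s}(1)$, $1$, $e^{-1}$, contradicting the relation above and establishing the transcendence of $\Gamma^{(s)}(a)$. The hard part will be this last linear-independence step---the asymptotic analysis with log factors and the careful delineation of the excepted case; the rest assembles cleanly from Conjecture~\ref{conj3}, Borel--Laplace $1$-summation of a single \antie-function, and Siegel--Shidlovskii--Beukers.
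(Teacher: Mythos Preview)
Your overall architecture is exactly the paper's: split $\Gamma^{(s)}(a)$ at $t=1$, recognize the two pieces as an $E$-value and $e^{-1}$ times a ${\bf D}$-value, apply Conjecture~\ref{conj3} to force the latter algebraic, and contradict a $\Qbar$-linear relation among $1$, $e^{-1}$, $E_{a,s}(1)$ via Beukers. The decomposition and the identification $\psi_{a,s;\theta}(1)\in{\bf D}$ are fine; the formula you derive matches the paper's Eq.~\eqref{eq:gammasa}.

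The gap is in the last step. You write ``it suffices to check that $E_{a,s}(z)$, $1$ and $e^{-z}$ are $\Qbar(z)$-linearly independent'', but Beukers' theorem applies to a vector of $E$-functions satisfying a first-order system $Y'=A(z)Y$. The three functions $1,e^{-z},E_{a,s}(z)$ do \emph{not} form such a system when $s\geq 1$: from $(z\,d/dz+a)E_{a,j}=E_{a,j-1}$ one sees that $E_{a,s}'$ involves $E_{a,s-1}$, so the ambient system must carry all of $E_{a,0},\ldots,E_{a,s}$ (this is the $(s+3)$-dimensional system the paper uses). Beukers' Corollary then requires $\Qbar(z)$-linear independence of \emph{all} $s+3$ components, not just the three you checked; alternatively, the lifting form of the theorem gives a relation $\sum_j L_j(z)E_{a,j}(z)+R(z)+S(z)e^{-z}=0$ with $L_s(1)\neq 0$, and it is this full relation (not $E_{a,s}=R+Se^{-z}$) that your asymptotic argument must rule out. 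Your asymptotics do extend to kill such a relation via the top $(\log z)^s$ term, but you have not written that argument.

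There is a second, related subtlety you have not addressed: when $a\in\N^{*}$ and $s\geq 1$, the component $E_{a,0}(z)$ \emph{does} lie in $\Qbar(z)+\Qbar(z)e^{-z}$ (you even note the identity for $E_{a,0}$), so the full $(s+3)$-vector is $\Qbar(z)$-linearly \emph{dependent} and Beukers' corollary does not apply as stated. The paper handles this by proving a separate linear-independence lemma (its Lemma~\ref{lemmenew}) with two cases, dropping $E_{a,0}$ from the system when $a\in\N^{*}$. Your asymptotic route can be made to work uniformly, but only if you argue against the full lifted relation rather than the three-term one. (Minor slip: the leading constant in your asymptotic is $\Gamma(a)/s!$, which is not algebraic in general; this does not affect the contradiction, since a nonzero rational function still cannot match $z^{-a}(\log z)^s$ for $a\notin\Z$ or $s\geq 1$.)
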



One of the aims of this paper is to show that combining \antie- and $E$-functions may lead to very important results in transcendental number theory. Let us recall now briefly the main known results on $E$-functions.


Point $(i)$ in the following result is due to André  \cite{andre2} for $E$-functions with rational Taylor coefficients, and to Beukers  \cite{beukers} in the general case. 
 Andr\'e used this property to obtain a new proof of the Siegel-Shidlovskii Theorem, and Beukers   to prove an optimal refinement of this theorem (namely, $(ii)$  
 below).

\begin{thmx} \label{theoe} 
$(i)$ $[$André, Beukers$]$ If an $E$-function $F(z)$ is such that $F(1)=0$, then $\frac{F(z)}{z-1}$ is an $E$-function.

$(ii)$ $[$Beukers$]$
Let $\underline{F}(z):={}^t(f_1(z), \ldots, f_n(z))$ be a vector of $E$-functions  solution of a differential system $\underline{F}'(z)=A(z)\underline{F}(z)$ for some matrix $A(z)\in M_n(\Qbar(z))$. 

Let $\xi\in\Qbar\etoile$ which is not a pole of a coefficient of $A$. Let $P\in\Qbar[X_1,\ldots,X_n]$ be a homogeneous polynomial such that 
$$P(f_{1}(\xi), \ldots, f_{n}(\xi))=0.$$
Then there exists $Q\in \Qbar[Z,X_1,\ldots,X_n]$, homogeneous in the $X_i$, such that 
$$Q(z, f_1(z), \ldots, f_n(z))=0 \mbox{ identically  and } P(X_1,\ldots,X_n)=Q(\xi, X_1,\ldots,X_n).$$
In particular, we have
$$
\trdeg_{\Qbar}(f_{1}(\xi), \ldots, f_{n}(\xi)) = 
\trdeg_{\Qbar(z)}( f_{1 }(z), \ldots, f_{n}(z)).
$$
\end{thmx}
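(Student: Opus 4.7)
The plan is to prove (i) by a direct computation on Taylor coefficients, and then to deduce (ii) by combining (i) with the linear-independence refinement of Siegel--Shidlovskii applied to the $d$-th symmetric tensor power of the original differential system.

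For (i), I would write $F(z)=\sum_{n\ge 0}a_n z^n/n!$ and set $H(z):=F(z)/(z-1)=\sum_{n\ge 0}b_n z^n/n!$. Since $F(1)=0$, $H$ is entire, and holonomy is inherited from $F$: if $L$ annihilates $F$ in $\Qbar(z)[d/dz]$, then $M[y]:=L[(z-1)y]$ annihilates $H$. Comparing Taylor coefficients in $(z-1)H=F$ gives the recursion $a_n=nb_{n-1}-b_n$ with $b_{-1}=0$, which telescopes to
\[
\frac{b_n}{n!}\;=\;-\sum_{k=0}^n\frac{a_k}{k!}\;=\;\sum_{k>n}\frac{a_k}{k!},
\]
the second equality by $F(1)=0$. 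From $|\sigma(a_k)|\le C^k$ (uniformly in $\sigma$) the tail on the right is $O(C^{n+1}/(n+1)!)$ for $n$ large, which yields $|\sigma(b_n)|\le C_1^n$ uniformly in $\sigma$. For denominators, the middle expression writes $b_n$ as a $\Z$-linear combination of $a_0,\ldots,a_n$, so $\operatorname{den}(b_0,\ldots,b_n)$ divides $\operatorname{den}(a_0,\ldots,a_n)$ and grows at most exponentially. All four $E$-function conditions are verified for $H$.

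For (ii), I would first reduce to $\xi=1$ by replacing $\underline F(z)$ with $\underline F(\xi z)$. Let $\mathcal M_d$ denote the set of monomials of degree $d$ in $X_1,\ldots,X_n$ and let $\underline G(z):=(m(\underline F(z)))_{m\in\mathcal M_d}$. Its components are $E$-functions, and $\underline G$ solves a linear differential system over $\Qbar(z)$ (the $d$-th symmetric tensor power of $\underline F{}'=A\underline F$). Writing $P=\sum_m P_m\cdot m$, the hypothesis becomes the $\Qbar$-linear relation $\sum_m P_m\cdot m(\underline F(1))=0$. I would then invoke the linear-independence refinement of Siegel--Shidlovskii for $\underline G$ to produce polynomials $q_m(z)\in\Qbar[z]$ with $\sum_m q_m(z)\cdot m(\underline F(z))\equiv 0$ identically and $q_m(1)=P_m$ for all $m$. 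Then $Q(z,X_1,\ldots,X_n):=\sum_m q_m(z)\cdot m(X_1,\ldots,X_n)$ is the desired homogeneous lift, and the transcendence-degree statement follows as a corollary by taking $P$ to range over the ideal of relations.

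The main obstacle is the linear-independence refinement of Siegel--Shidlovskii invoked in (ii): a priori Siegel--Shidlovskii only guarantees equality of transcendence degrees, so one must show that a $\Qbar$-linear relation at $\xi$ genuinely lifts to a functional linear relation specializing correctly. This is exactly where part (i) is essential: given an a priori functional relation among the components of $\underline G$ whose coefficients all vanish at $z=1$, one divides through by $(z-1)$ and uses (i) to reinterpret the resulting objects as $E$-functions in a larger system, iterating finitely many times until the specialization at $z=1$ matches $P$.
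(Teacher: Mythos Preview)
The paper does not give its own proof of Theorem~A: it is quoted as a known result, with part~$(i)$ attributed to Andr\'e and Beukers and part~$(ii)$ to Beukers, and the reader is referred to \cite{andre2} and \cite{beukers}. So there is no in-paper argument to compare with; the relevant question is whether your proposal actually reproves the theorem.

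Your argument for~$(i)$ has a genuine gap, and it is precisely the gap that makes the Andr\'e--Beukers theorem nontrivial. The identity
\[
\frac{b_n}{n!}=-\sum_{k=0}^{n}\frac{a_k}{k!}
\]
is a finite algebraic relation and can be conjugated by any $\sigma$. But the second equality
\[
\frac{b_n}{n!}=\sum_{k>n}\frac{a_k}{k!}
\]
is \emph{equivalent} to $\sum_{k\ge 0}a_k/k!=0$, i.e.\ to $F(1)=0$ in the fixed complex embedding. When you write ``the tail on the right is $O(C^{n+1}/(n+1)!)$ \ldots\ which yields $|\sigma(b_n)|\le C_1^n$ uniformly in $\sigma$'', you are implicitly using the conjugated tail identity
\[
\frac{\sigma(b_n)}{n!}=\sum_{k>n}\frac{\sigma(a_k)}{k!},
\]
and this requires $F^{\sigma}(1):=\sum_{k\ge 0}\sigma(a_k)/k!=0$. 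Nothing in the hypotheses gives you that: $F(1)=0$ is a transcendental relation among infinitely many algebraic numbers, and it is not stable under Galois conjugation of the coefficients. Using only the finite-sum identity you get at best $|\sigma(b_n)|\le e^{C}\, n!$, which is factorial, not exponential. Establishing that the conjugate values $F^{\sigma}(1)$ also vanish (equivalently, that $H$ really is an $E$-function) is exactly the deep content of the theorem; Andr\'e proves it via $p$-adic methods, and Beukers via the Andr\'e--Chudnovsky theorem on the singularities of $G$-operators together with a desingularization argument. A direct coefficient computation cannot close this.

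Your outline for~$(ii)$ --- pass to the $d$-th symmetric power, reduce the polynomial relation to a linear one, and then lift the linear relation at $\xi$ to a functional linear relation using~$(i)$ in an iterative division-by-$(z-1)$ scheme --- is indeed the shape of Beukers' argument in \cite{beukers}. But it rests entirely on part~$(i)$, so the gap above propagates.
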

The Siegel-Shidlovskii Theorem itself is the final statement about equality of transcendence degrees. 

In this paper we state conjectural analogues of these results, involving \antie-functions.  The principal difficulty is that these functions are divergent power series, and the exact  analogue of Theorem \ref{theoe} is meaningless. André discussed  the situation in \cite{andre2} and even though he did not formulate exactly the following conjecture, it seems plausible to us. From it, we will show how to deduce an analogue of  the Siegel-Shidlovskii theorem for \antie-functions. Ferguson \cite[p.~171, Conjecture  1]{ferguson} essentially stated this conjecture when $\fgo(z)$ has {\em rational} coefficients and when $\theta=0$.

\begin{conj} \label{conjantie} Let $\fgo(z)$ be an \antie-function and $\theta\in (-\pi/2,\pi/2)$ be such that $\fgo_\theta(1)=0$. Then $\frac{\fgo(z)}{z-1}$ is an  \antie-function.
\end{conj}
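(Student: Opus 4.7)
The strategy I would pursue is to mirror the Andr\'e--Beukers proof of Theorem~A$(i)$, transported to the Borel side. Write $\fgo(z)=\sum_{n\ge 0} n!\, a_n z^n$, so that $g(\zeta):=\sum_{n\ge 0} a_n \zeta^n$ is, by definition, a $G$-function. The Borel--Laplace identity
$$\fgo_\theta(1/z)\;=\;z\int_{0}^{\infty e^{i\theta}} g(\eta)\,e^{-z\eta}\,d\eta$$
specializes at $z=1$ to
$$\fgo_\theta(1)\;=\;\int_{0}^{\infty e^{i\theta}} g(\eta)\,e^{-\eta}\,d\eta,$$
so the hypothesis is exactly the vanishing of this integral. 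On the formal side, expanding $1/(z-1)=-\sum_{k\ge 0} z^k$ gives
$$\frac{\fgo(z)}{z-1}\;=\;-\sum_{m\ge 0} z^m \sum_{n=0}^{m} n!\,a_n,$$
so $\fgo(z)/(z-1)$ is an \antie-function precisely when the power series $B(z):=\sum_{m\ge 0}\tilde a_m z^m$, with $\tilde a_m:=-\frac{1}{m!}\sum_{n=0}^{m} n!\,a_n$, is a $G$-function.

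Of the three conditions to verify on $B$, two are immediate. The recurrence $m(\tilde a_m+a_m)=\tilde a_{m-1}$ translates into the inhomogeneous linear ODE $B'(z)-B(z)=-g'(z)$, which gives holonomicity, and the estimate $|\tilde a_m|\le\sum_{n\le m}(n!/m!)|a_n|=O(C^m)$ (with the same bound for the Galois conjugates) gives the required exponential growth. The only nontrivial point is the arithmetic condition, namely that $\lcm(\mathrm{den}(\tilde a_0),\dots,\mathrm{den}(\tilde a_m))\le C'^m$ for some $C'$. Since $m!\,\tilde a_m=-\sum_{n\le m} n!\,a_n$, this amounts to forcing an exceptional divisibility of the partial sums $\sum_{n\le m} n!\,a_n$ by $m!$. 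That the vanishing hypothesis is really needed is illustrated by the example $\fgo(z)=\sum n!\,z^n$ (where $g(\zeta)=1/(1-\zeta)$ and $\fgo_\theta(1)\neq 0$): the corresponding $\tilde a_m$ has denominator of order $m!$, so $B$ is not a $G$-function, in consistency with the conjecture.

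The heart of the matter is thus to convert the analytic cancellation $\int_0^{\infty e^{i\theta}} g(\eta)\,e^{-\eta}\,d\eta=0$ into the arithmetic estimate on the residues of the partial sums modulo $m!$. A natural approach is via the theory of Andr\'e's $E$-operators: letting $L$ be a minimal $E$-operator annihilating $\fgo$, its Fourier--Laplace dual $\widehat L$ is a $G$-operator annihilating $\fgo_\theta$, and the analogue of Beukers' argument would ask for $z=1$ to be at worst an apparent singularity of $\widehat L$ with integer exponents and no logarithms, so that $\fgo_\theta(z)/(z-1)$ remains a solution of a $G$-operator of the same kind; inverting the Fourier--Laplace would then recover $\fgo(z)/(z-1)$ as an \antie-function. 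The main obstacle -- and almost surely the reason the statement is still only conjectural -- is establishing such a global structure result for the singular loci of the $G$-operators $\widehat L$: the counterpart for $E$-operators used by Andr\'e and Beukers relies ultimately on $p$-adic Galois-theoretic input (Katz's theorem), and a companion statement of the required strength on the $G$-side of the Fourier--Laplace correspondence does not seem to be currently available.
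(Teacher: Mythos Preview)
The statement you were asked to prove is \emph{Conjecture}~\ref{conjantie} in the paper, and the paper does not prove it: it is explicitly presented as an open problem (attributed in essence to Ferguson, and discussed by Andr\'e). The paper uses it only as a standing hypothesis, e.g.\ in Theorem~\ref{theoantie} and Corollary~\ref{coro:0701}. So there is no proof in the paper to compare your proposal against.

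That said, your write-up is not really a proof either, and you are candid about this in your final paragraph. What you have produced is a correct and rather sharp diagnosis of \emph{where} the difficulty lies. Your reductions are sound: writing $\fgo(z)=\sum n!\,a_n z^n$ and $B(z)=\sum \tilde a_m z^m$ with $m!\,\tilde a_m=-\sum_{n\le m} n!\,a_n$, the holonomicity of $B$ follows from the inhomogeneous equation $B'-B=-g'$ (which one then homogenizes using a $G$-operator for $g$), and the archimedean growth bound is immediate. The entire content of the conjecture is indeed the denominator estimate on the $\tilde a_m$, and your Euler-series example $\fgo(z)=\sum n!\,z^n$ correctly illustrates that the vanishing hypothesis $\fgo_\theta(1)=0$ is genuinely needed for this estimate to have a chance.

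One small inaccuracy in your final sketch: in the Fourier--Laplace picture, the dual $G$-operator $\widehat L$ annihilates the \emph{Borel transform} $g$, not the $1$-sum $\fgo_\theta$; the function $\fgo_\theta$ is still annihilated by the original $E$-operator $L$ itself (summation commutes with the differential equation). This does not change your overall conclusion --- the missing ingredient is precisely a structural result on the $G$-side strong enough to force the required $p$-adic/denominator control, and no such result is currently known --- but it is worth getting the bookkeeping right if you want to push the analogy with the Andr\'e--Beukers argument further.
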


In other words, the conclusion of this conjecture asserts that $\frac{z}{1-z} \fgo(1/z)$ is an \antie-function in $1/z$; this is equivalent to $\frac{\fgo(1/z)}{z-1}$ being an  \antie-function in $1/z$ (since we have $ \frac{\fgo(1/z)}{z-1} = O(1/z)$ unconditionally as $|z|\to\infty$). 


Following Beukers' proof \cite{beukers} yields the following result (see \cite[\S 4.6]{Andrebordeaux} for a related conjecture).

\begin{theo}\label{theoantie} 
Assume that Conjecture \ref{conjantie}  holds.

Let $\underline{\fgo}(z):={}^t(\fgo_1(z), \ldots, \fgo_n(z))$ be a vector of $\antie$-functions  solution of a differential system $\underline{\fgo}'(z)=A(z)\underline{\fgo}(z)$ for some matrix $A(z)\in M_n(\Qbar(z))$. Let $\xi\in\Qbar\etoile$ and $\theta\in (\arg(\xi)-\pi/2,\arg(\xi)+\pi/2)$ ; assume that $\xi$ is not a pole of a coefficient of $A$, and that $\theta$ is anti-Stokes for none of the $\fgo_j$. 

Let $P\in\Qbar[X_1,\ldots,X_n]$ be a homogeneous polynomial such that 
$$P(\fgo_{1,\theta}(1/\xi), \ldots, \fgo_{n,\theta}(1/\xi))=0.$$
Then there exists $Q\in \Qbar[Z,X_1,\ldots,X_n]$, homogeneous in the $X_i$, such that 
$$Q(z, \fgo_1(z), \ldots, \fgo_n(z))=0 \mbox{ identically  and } P(X_1,\ldots,X_n)=Q(1/\xi, X_1,\ldots,X_n).$$
In particular, we have
$$\trdeg_{\Qbar}( \fgo_{1,\theta}(1/\xi), \ldots, \fgo_{n,\theta}(1/\xi) ) = 
\trdeg_{\Qbar(z)}( \fgo_{1 }(z), \ldots, \fgo_{n}(z)). $$
\end{theo}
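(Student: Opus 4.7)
The plan is to follow Beukers' proof of Theorem~\ref{theoe}(ii) \emph{mutatis mutandis}, replacing the Andr\'e--Beukers lemma (Theorem~\ref{theoe}(i)) by its conjectured \antie-analogue, Conjecture~\ref{conjantie}, and interpreting every evaluation as a Ramis 1-summation in the direction $\theta$. I would first reduce to the case $\xi=1$ (so $1/\xi=1$ and $\theta\in(-\pi/2,\pi/2)$) by the homothety $z\mapsto z/\xi$: this sends \antie-functions to \antie-functions, transforms the matrix $A(z)$ into $\xi^{-1}A(z/\xi)\in M_n(\Qbar(z))$, and shifts anti-Stokes directions by $-\arg(\xi)$, so that the hypothesis on $\theta$ becomes the one under which Conjecture~\ref{conjantie} is stated. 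A preliminary fact to record is that the \antie-functions form a $\Qbar$-algebra and that 1-summation in a fixed non-anti-Stokes direction is a $\Qbar$-algebra morphism; in particular, for any homogeneous $P\in\Qbar[X_1,\ldots,X_n]$, the series $G(z):=P(\fgo_1(z),\ldots,\fgo_n(z))$ is an \antie-function whose 1-summation equals $P(\fgo_{1,\theta},\ldots,\fgo_{n,\theta})$.

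With these preparations, I would set up Beukers' alternative. Introduce the ideals
\[
\mathfrak J\recoit\{P\in\Qbar[X_1,\ldots,X_n]\text{ homogeneous}:P(\fgo_{1,\theta}(1),\ldots,\fgo_{n,\theta}(1))=0\}
\]
and
\[
\mathfrak I\recoit\{Q\in\Qbar[Z,X_1,\ldots,X_n]\text{ homogeneous in the }X_i:Q(z,\fgo_1(z),\ldots,\fgo_n(z))=0\};
\]
the theorem then reduces to the identity $\mathfrak J=\mathfrak I|_{Z=1}$. The inclusion $\supseteq$ is trivial, by specialization of the formal identity to $z=1$ followed by 1-summation. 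For the reverse inclusion, I would argue by induction on the degree of a hypothetical minimal $P\in\mathfrak J\setminus\mathfrak I|_{Z=1}$: the \antie-function $G(z):=P(\fgo_1(z),\ldots,\fgo_n(z))$ has $G_\theta(1)=0$, so by Conjecture~\ref{conjantie} the series $G(z)/(z-1)$ is again an \antie-function. Feeding this into Beukers' Galois-theoretic dimension count---carried out in the Picard--Vessiot extension of $\Qbar(z)$ attached to the system $\underline{\fgo}'=A\underline{\fgo}$, which is the same formal object as the one underlying Beukers' proof---one produces a lift of $P$ to a functional relation in $\mathfrak I$, contradicting minimality. The transcendence-degree equality then follows from $\mathfrak J=\mathfrak I|_{Z=1}$ by the usual homogenization trick (adjoining the constant \antie-function $1$).

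The principal obstacle I anticipate is verifying that Beukers' differential-Galois framework transports faithfully to the \antie-setting: one must realise the 1-summations $\fgo_{j,\theta}$, which a priori live only on a sector bisected by $\theta$ near infinity, inside a Picard--Vessiot extension of $\Qbar(z)$ on which the differential Galois group of $\underline{\fgo}'=A\underline{\fgo}$ acts, and check that this group is the same as the one attached to the formal system. One must also track anti-Stokes directions carefully throughout the induction: every quotient $G(z)/(z-1)$ and every polynomial combination of \antie-functions appearing in Beukers' argument must retain $\theta$ as a non-anti-Stokes direction, and homogeneity in the $X_i$ must be preserved so that Conjecture~\ref{conjantie} can be applied at each stage. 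Once these book-keeping points are settled, Beukers' proof in \cite{beukers} transposes essentially verbatim.
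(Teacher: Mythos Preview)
Your proposal is correct and takes essentially the same route as the paper: both argue that Beukers' proof in \cite{beukers} transposes verbatim once Theorem~\ref{theoe}(i) is replaced by Conjecture~\ref{conjantie} and every evaluation is read as a 1-summation in the non-anti-Stokes direction~$\theta$. The paper organises this by first isolating the \antie-analogue of Beukers' Corollary~2.2 (Proposition~\ref{propintermed}: if $\fgo_\theta(1/\xi)=0$ then $\xi$ is an apparent singularity of the minimal operator of $\fgo(1/z)$, so all solutions vanish there) and then invoking \cite[\S 3]{beukers}; your ideal-theoretic and Galois-theoretic description is a faithful paraphrase of that same \S 3.
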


As an application of Theorem \ref{theoantie}, we shall prove the following corollary. Note that under his weaker version of Conjecture~\ref{conjantie}, Ferguson \cite[p.~171, Theorem 2]{ferguson} proved that Gompertz's constant is an irrational number.

\begin{coro} \label{coro:0701} 
Assume that Conjecture \ref{conjantie} holds. Then for any $\alpha\in \Qbar$, $\alpha>0$, and any $s\in \mathbb Q \setminus \mathbb Z_{\ge 0}$, the number $\int_0^\infty (t+\alpha)^s e^{-t}dt$ is a transcendental number.

In particular, Gompertz's constant $\delta:=\int_0^\infty e^{-t}/(t+1) dt$ is a transcendental number.
\end{coro}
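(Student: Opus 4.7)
The plan is to express the integral $I:=\int_0^\infty (t+\alpha)^s e^{-t}\,dt$ as the Ramis $1$-sum $\fgo_0(1)$ of an explicit $\antie$-function $\fgo$, and then to apply Theorem~\ref{theoantie} to the two-dimensional vector $(1,\fgo)$ in order to rule out the algebraicity of $I$.

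First, I would introduce the formal power series
$$\fgo(z):=\sum_{n=0}^\infty n!\binom{s}{n}\alpha^{s-n}\,z^n,$$
whose Borel transform is $g(\zeta):=(\zeta+\alpha)^s$. Because $s\in\mathbb{Q}$, the function $g$ is algebraic over $\Qbar(\zeta)$, hence by Eisenstein its Taylor expansion at $\zeta=0$ is a $G$-function, so $\fgo$ is an $\antie$-function. A direct recurrence derived from $(\zeta+\alpha)g'(\zeta)=sg(\zeta)$ yields the inhomogeneous linear ODE
$$z^2\fgo'(z)+(\alpha-sz)\fgo(z)=\alpha^{s+1},$$
so the pair $\underline{\fgo}:={}^t(1,\fgo)$ is a solution of a homogeneous linear system $\underline{\fgo}{}'=M(z)\underline{\fgo}$ with $M(z)\in M_2(\Qbar(z))$ whose only pole is at $z=0$. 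Since $\alpha>0$, the unique singularity of $g$ lies at $\zeta=-\alpha$, in the direction $\pi$; thus $\theta=0$ is not anti-Stokes for $\fgo$, and the Borel--Laplace representation gives
$$\fgo_0(1)=\int_0^\infty (\zeta+\alpha)^s e^{-\zeta}\,d\zeta=I.$$

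Next, I would apply Theorem~\ref{theoantie} to $\underline{\fgo}$ with $\xi=1$ (which is not a pole of $M$) and $\theta=0\in(-\pi/2,\pi/2)$. If $I=\beta$ were algebraic, the homogeneous polynomial $P(X_1,X_2):=\beta X_1-X_2$ would vanish at $(1,\beta)=(\fgo_{1,0}(1),\fgo_{2,0}(1))$, and the theorem would produce $Q(z,X_1,X_2)=A(z)X_1+B(z)X_2\in\Qbar[z,X_1,X_2]$, homogeneous of degree $1$ in the $X_i$, with $A(z)+B(z)\fgo(z)\equiv 0$ and $(A(1),B(1))=(\beta,-1)$. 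Since $B\not\equiv 0$, this would force $\fgo=-A/B\in\Qbar(z)$; but under the hypothesis $s\notin\mathbb{Z}_{\ge 0}$ every coefficient $n!\binom{s}{n}\alpha^{s-n}$ is nonzero and grows factorially, so $\fgo$ has radius of convergence zero and cannot be a rational function, a contradiction. Specialization to $\alpha=1$, $s=-1$ then gives the transcendence of Gompertz's constant~$\delta$.

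The routine parts are the derivation of the ODE for $\fgo$ and the verification (via Eisenstein's theorem on algebraic functions) that $(\zeta+\alpha)^s$ is a $G$-function; beyond the standing assumption of Conjecture~\ref{conjantie} on which Theorem~\ref{theoantie} rests, the only truly delicate point is ensuring that $\fgo$ is a genuinely divergent, non-rational $\antie$-function, and the exclusion $s\notin\mathbb{Z}_{\ge 0}$ is designed precisely so that $\binom{s}{n}\neq 0$ for every $n\ge 0$.
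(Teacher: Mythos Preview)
Your proof is correct and follows essentially the same approach as the paper's: both express the integral as the $1$-sum of an explicit \antie-function satisfying a first-order inhomogeneous ODE with only $0$ as a singularity, then apply Theorem~\ref{theoantie} to the vector ${}^t(1,\fgo)$ and use $s\notin\Z_{\ge 0}$ to see that $\fgo$ is not rational. The only cosmetic difference is normalization: the paper takes the $\alpha$-independent series $\fgo(z)=\sum_{n\ge 0} s(s-1)\cdots(s-n+1)z^n$ and evaluates at $\xi=\alpha$ (recovering the stated integral up to the algebraic factor $\alpha^s$), whereas you absorb $\alpha$ into the coefficients and evaluate at $\xi=1$.
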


In this text we suggest an approach towards Conjecture \ref{conj3}, based on the new notion of {\em mixed functions} which enables one to consider $E$- and \tantie-functions at the same time. In particular we shall state a conjecture about such functions, namely Conjecture~\ref{conj1} in \S\ref{ssec:mixedfunctions}, which implies both Conjecture \ref{conj3} and Conjecture~\ref{conjantie}. The following result is a motivation for this approach.

\begin{prop}\label{proppasdansE}
 Assume that both Conjectures \ref{conj3} and \ref{conjantie} hold. Then 
neither Euler's constant $\gamma:=-\Gamma'(1)$ nor $\Gamma(a)$ (with $a\in \mathbb Q^+\setminus \mathbb N$) are in {\bf E}.
\end{prop}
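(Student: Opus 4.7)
The plan is, for each target $\tau\in\{\gamma,\Gamma(a)\}$, to write $\tau = \alpha - e^{-1}\beta$ with $\alpha\in{\bf E}$ and $\beta\in{\bf D}$, and then to run the following uniform contradiction: if $\tau$ lay in~${\bf E}$, then $\beta = e(\alpha - \tau)$ would lie in~${\bf E}$ (since $e\in{\bf E}$ and ${\bf E}$ is a ring), hence in ${\bf E}\cap{\bf D}=\Qbar$ by Conjecture~\ref{conj3}, contradicting the transcendence of $\beta$ supplied by Corollary~\ref{coro:0701} under Conjecture~\ref{conjantie}.

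For Euler's constant I would use the classical decomposition $\gamma = \mathrm{Ein}(1) - E_1(1)$, where
$$\mathrm{Ein}(z) = \int_0^z \frac{1-e^{-t}}{t}\,dt = \sum_{n\geq 1}\frac{(-1)^{n-1}z^n}{n \cdot n!}$$
is an entire $E$-function (so $\mathrm{Ein}(1)\in{\bf E}$), and where the substitution $u = t-1$ gives $E_1(1) = e^{-1}\delta$ for Gompertz's constant $\delta=\int_0^\infty e^{-u}/(1+u)\,du$, the paradigmatic element of~${\bf D}$. The uniform mechanism, with Corollary~\ref{coro:0701} applied at $\alpha=1$ and $s=-1$, then delivers the contradiction.

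For $\Gamma(a)$ I would split $\Gamma(a) = \gamma(a,1) + \Gamma(a,1)$ into the lower and upper incomplete Gamma values at $1$. To place $\gamma(a,1)$ in~${\bf E}$, I would consider $F(z)=\sum_{n\geq 0}(-1)^n z^n/(n!(n+a))$; the identity $z^a F(z) = \int_0^z t^{a-1}e^{-t}\,dt$ shows that $F(1)=\gamma(a,1)$ and that $zF'(z)+aF(z)=e^{-z}$, so $F$ is annihilated by $(\partial+1)(z\partial+a)$, yielding the holonomic equation $zF''+(1+z+a)F'+aF=0$. The arithmetic conditions reduce to the classical bound $\mathrm{lcm}(p+nq:0\leq n\leq N)=e^{O(N)}$ for $a=p/q$, so $F$ is an $E$-function and $\gamma(a,1)\in{\bf E}$. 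On the other hand, the substitution $u=t+1$ identifies $e\Gamma(a,1) = \int_0^\infty (t+1)^{a-1}e^{-t}\,dt$ as an element of~${\bf D}$, transcendental by Corollary~\ref{coro:0701} with $s=a-1\in\mathbb{Q}\setminus\mathbb{Z}_{\geq 0}$. The uniform mechanism then applies with $\alpha=\gamma(a,1)$ and $\beta=e\Gamma(a,1)$.

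The only non-routine step is the verification that $F(z)$ is genuinely an $E$-function: the differential-algebraic part is immediate and the Galois-conjugate bound is trivial (the coefficients are rational), but the common-denominator estimate depends on the exponential Chebyshev/PNT-type bound for least common multiples along an arithmetic progression. Once this is granted, the argument reduces to ring manipulations inside~${\bf E}$ combined with a single appeal to each of Conjectures~\ref{conj3} and~\ref{conjantie} (via Corollary~\ref{coro:0701}).
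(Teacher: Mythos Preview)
Your argument is correct and essentially identical to the paper's: your decompositions $\gamma=\mathrm{Ein}(1)-e^{-1}\delta$ and $\Gamma(a)=\gamma(a,1)+e^{-1}\cdot e\Gamma(a,1)$ are exactly the paper's identities $\gamma=E_{1,2}(-1)-e^{-1}\fgo_{1,2;0}(1)$ and $\Gamma(a)=E_{a,1}(-1)+e^{-1}\fgo_{a,1;0}(1)$ (the cases $s=1,a=1$ and $s=0$ of Eq.~\eqref{eq:gammasa}) rewritten in classical special-function notation, and the contradiction via Conjecture~\ref{conj3} and Corollary~\ref{coro:0701} is the same. The only additional content in your write-up is the explicit check that $F(z)=E_{a,1}(-z)$ is an $E$-function, which the paper records as known in \S\ref{ssec:conseq1}.
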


It is likely that none of these numbers   is  in {\bf G}, but (as far as we know) there is no ``functional'' conjecture like Conjecture \ref{conj1} that implies this. It is also likely that none is in  {\bf D} as well, but we don't know if this can be deduced from Conjecture \ref{conj1}.

\bigskip

The structure of this paper is as follows. In \S \ref{ssec:mixedfunctions} we define and study mixed functions, a combination of $E$- and \antie-functions. Then in \S \ref{sec:proofthmpolyGEantiE} we express any value of  a $G$-function as a polynomial in values of $E$- and \antie-functions, thereby proving Theorem \ref{theo:GpolyEantiE}.
We study derivatives of the $\Gamma $ function at rational points  in \S \ref{sec:consequences}, and prove  Theorem~\ref{prop3} and Proposition~\ref{proppasdansE}. At last, \S \ref{sec5} is devoted to adapting  Beukers' method to our setting: this approach yields  Theorem \ref{theoantie} and Corollary \ref{coro:0701}.

\section{Mixed  functions}\label{ssec:mixedfunctions}

\subsection{Definition and properties}

In view of Theorem \ref{theo:GpolyEantiE}, it is natural to study polynomials in $E$- and \tantie-functions. We can prove a Diophantine result that combines both Theorems \ref{theoe}$(ii)$ and \ref{theoantie} but under a very complicated polynomial  generalization of Conjecture \ref{conjantie}. We opt here for a different approach to mixing $E$- and \tantie-functions for which very interesting Diophantine consequences can be deduced from a very easy to state conjecture which is more in the spirit of Conjecture \ref{conjantie}. We refer to \S \ref{ssec:proofprop1} for proofs of all properties stated in this section (including Lemma~\ref{lemunicite} and Proposition \ref{prop1}), except Theorem \ref{theomixte}.

\begin{defi} \label{def:mixedfn}
We call {\em mixed (arithmetic Gevrey) function} any formal power series
$$\sum_{n\in\Z} a_n z^n$$
such that $\sum_{n\geq 0} a_n z^n$ is an $E$-function in $z$, and $\sum_{n\geq 1} a_{-n} z^{-n}$ is an \antie-function in $1/z$. 
\end{defi}
In other words, a mixed function is defined as a formal sum $\Psi(z)=F(z)+ \fgo (1/z)$ where $F$ is an $E$-function and $\fgo$ is an \antie-function. In particular, such a function is zero if, and only if, both $F$ and $\fgo$ are constants such that $F+\fgo=0$; obviously, $F$ and $\fgo$ are uniquely determined by $\Psi$ upon assuming (for instance) that $\fgo(0)=0$. The set of mixed functions is a $\Qbar$-vector space stable under multiplication by $z^n $ for any $n\in \Z$.  Unless $\fgo(z)$ is a polynomial, such a function $\Psi(z)=F(z)+ \fgo (1/z)$ is purely formal: there is no $z\in\C$ such that $\fgo(1/z)$ is a convergent series. However, choosing a  direction $\theta$  which is not anti-Stokes for   $\fgo$ allows one to evaluate $\Psi_\theta(z) = F(z)+  \fgo_{\theta} (1/z)$ at any $z$ in a large sector bisected by $\theta$. Here and below, 
such a direction will be said {\em not anti-Stokes for $\Psi$} and 
whenever we write $\fgo_{\theta}$ or $\Psi_\theta$ we shall assume implicitly that $\theta$ is not    anti-Stokes.

\medskip

Definition \ref{def:mixedfn} is a formal definition, but one may identify a mixed function with the holomorphic function it defines on a given large sector by means of the following lemma.

\begin{lem} \label{lemunicite} Let $\Psi$ be a mixed function, and $\theta\in\R$ be a non-anti-Stokes direction for $\Psi$. Then $\Psi_\theta$ is identically zero (as a holomorphic function on a large sector bisected by $\theta$) if, and only if, $\Psi$ is equal to zero (as a formal power series in $z$ and $1/z$).
\end{lem}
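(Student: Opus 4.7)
The plan is to decompose $\Psi$ uniquely as $\Psi(z) = F(z) + \fgo(1/z)$ with $F$ an $E$-function and $\fgo$ an \antie-function normalised by $\fgo(0)=0$, as the paper recalls just before the statement. Then $\Psi_\theta(z) = F(z) + \fgo_\theta(1/z)$ on a large sector $S$ bisected by $\theta$, which we take of opening $\pi+2\eps$ for some $\eps>0$ small enough. The implication ``$\Psi=0 \Rightarrow \Psi_\theta\equiv 0$'' is immediate, since $\Psi=0$ forces every coefficient to vanish, hence $F=0$ and $\fgo=0$. All the work lies in the converse.

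Assuming $\Psi_\theta\equiv 0$ on $S$, I would rewrite the identity as $F(z) = -\fgo_\theta(1/z)$ on $S$. The normalisation $\fgo(0)=0$ means that the asymptotic expansion of $\fgo_\theta(w)$ at $w=0$ starts at order $\geq 1$, so $\fgo_\theta(1/z) = O(1/|z|) \to 0$ as $|z|\to\infty$ in $S$. Consequently $F$, which is entire, tends to $0$ at infinity along $S$; in particular $F$ is bounded on $S$ (including its two boundary rays), say by $M$.

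Now $F$, being an $E$-function, is an entire function of order at most $1$ and finite exponential type. The sector $S^c$ complementary to $S$ in $\C\setminus\{0\}$ has opening $\pi-2\eps<\pi$, so Phragmén--Lindelöf applies in $S^c$: an entire function of order $\leq 1$ that is bounded by $M$ on the two bounding rays of a sector of opening strictly less than $\pi$ is bounded by $M$ throughout that sector. Combined with the bound on $S$, this shows $F$ is bounded on all of $\C$; Liouville gives $F$ constant, and the constant must be $0$ because $F\to 0$ along $S$. Feeding this back into $F(z)=-\fgo_\theta(1/z)$ yields $\fgo_\theta\equiv 0$ on $S$, and since $\fgo$ is, by definition, the asymptotic expansion of $\fgo_\theta$ at infinity in $S$, uniqueness of asymptotic expansions forces $\fgo=0$ as a formal power series. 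Hence $\Psi=0$.

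The only delicate step is the Phragmén--Lindelöf application: it requires the growth order of $F$ to be strictly less than the critical exponent $\pi/(\pi-2\eps)>1$ attached to $S^c$. This is automatic for $E$-functions (order $\leq 1$) as soon as $\eps>0$, which is exactly the gain provided by the fact that 1-summation extends $\fgo_\theta$ to a sector strictly larger than $\pi$. I do not expect any other obstacle; everything else is a standard manipulation of Borel--Laplace sums and their asymptotic expansions.
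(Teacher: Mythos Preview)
Your argument is correct, and it is genuinely different from the paper's proof.

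You reduce to $F(z)=-\fgo_\theta(1/z)$ on a sector $S$ of opening $\pi+2\eps$, use the normalisation $\fgo(0)=0$ to get $F(z)\to 0$ as $|z|\to\infty$ in $S$, and then kill $F$ with Phragm\'en--Lindel\"of on the complementary sector of opening $\pi-2\eps<\pi$ (the finite exponential type of $E$-functions being strictly below the critical order $\pi/(\pi-2\eps)>1$). This is clean and self-contained: the only external inputs are Phragm\'en--Lindel\"of, Liouville, and the uniqueness of asymptotic expansions in large sectors. One cosmetic point: to have the bound on the \emph{closed} boundary rays of $S$ (needed for Phragm\'en--Lindel\"of on $S^c$), just shrink $\eps$ slightly so that the closure of your sector sits inside the open domain of $\fgo_\theta(1/z)$; the asymptotic decay is then uniform there.

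The paper proceeds quite differently. It invokes the explicit description of asymptotic expansions of $E$-functions at infinity from \cite[Theorem~5]{ateo}: writing $F(z)-F(0)=\sum_{n\geq 1}\frac{a_n}{n!}z^n$ and $g(z)=\sum_{n\geq 1}a_n z^{-n-1}=G(1/z)/z$ for a $G$-function $G$, the absence of any exponential contribution $e^{\sigma z}$ in the expansion of $F$ forces $g$ to be holomorphic at every $\sigma\neq 0$. Hence $G$ is entire with moderate growth at $\infty$, so a polynomial by Liouville, and $F$ itself is a polynomial; uniqueness of asymptotic expansions then makes $F$ and $\fgo$ constants summing to $0$. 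This route is heavier but yields more: the paper records (in a footnote) the stronger fact that the asymptotic expansion of a non-polynomial $E$-function can never be a $\C$-linear combination of terms $z^\alpha(\log z)^k\fgo(1/z)$ --- some genuine exponential must appear. Your Phragm\'en--Lindel\"of argument does not see this finer structural statement, but for the lemma as stated it is shorter and more elementary.
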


\medskip

Any mixed function $\Psi(z) = F(z)+\fgo(1/z)$ is solution of an $E$-operator. Indeed, 
this follows from applying \cite[Theorem 6.1]{andre}  twice: 
there exist   an $E$-operator $L$ such that $L( \fgo (1/z))=0$, and
an $E$-operator $M$ such that $M(L(F(z)))=0$ (because   $L(F(z))$ is an $E$-function). Hence $ML(F(z)+\fgo(1/z))=0$ and
 by \cite[p. 720, \S 4.1]{andre}, $ML$ is an $E$-operator.

\medskip

We formulate the following  conjecture, which implies both Conjecture \ref{conj3} and Conjecture~\ref{conjantie}.

\begin{conj}\label{conj1} Let $\Psi(z) $ be an mixed function, and $\theta\in (-\pi/2,\pi/2)$ be such that $\Psi_\theta(1)=0$.
Then $\frac{\Psi(z)}{z-1}$ is an mixed function.
\end{conj}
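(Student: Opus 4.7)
The plan is as follows. Write $\Psi(z) = F(z) + \fgo(1/z)$ with $F$ an $E$-function and $\fgo$ an \antie-function normalised so that $\fgo(0) = 0$, and set $c := F(1)$, so that by hypothesis $\fgo_\theta(1) = -c$. First I would split
$$
\frac{\Psi(z)}{z-1} \;=\; \frac{F(z)-c}{z-1} \;+\; \frac{c+\fgo(1/z)}{z-1}
$$
and handle the two summands separately. For the first, $F(z)-c$ is an $E$-function vanishing at $z=1$, so Theorem~\ref{theoe}$(i)$ of André--Beukers gives directly that $(F(z)-c)/(z-1)$ is an $E$-function.

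For the second summand, set $\tilde{\fgo}(w) := c + \fgo(w)$: modifying the constant term of the underlying series by $c\in\Qbar$ preserves the $G$-function condition, so $\tilde{\fgo}$ is again an \antie-function, and $\tilde{\fgo}_\theta(1) = c + \fgo_\theta(1) = 0$. Setting $w=1/z$, a short computation gives
$$
\frac{c+\fgo(1/z)}{z-1} \;=\; -\frac{1}{z}\cdot \frac{\tilde{\fgo}(w)}{w-1}.
$$
Since multiplication by $w$ preserves the class of \antie-functions (it amounts to integrating the underlying $G$-function), it suffices to show that $\tilde{\fgo}(w)/(w-1)$ is itself an \antie-function. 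This is precisely Conjecture~\ref{conjantie} applied to $\tilde{\fgo}$. Reversing the reduction, $-(1/z)\,\tilde{\fgo}(w)/(w-1)$ is then an \antie-function in $1/z$ vanishing at $z=\infty$, which is exactly the divergent part of a mixed function.

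The reduction above shows that Conjecture~\ref{conj1} follows from Conjecture~\ref{conjantie} together with the André--Beukers theorem (Theorem~\ref{theoe}$(i)$); combined with the obvious reverse implication (take $F=0$), this makes Conjecture~\ref{conj1} essentially equivalent to Conjecture~\ref{conjantie}. The genuine obstacle is therefore Conjecture~\ref{conjantie}, which I would attempt by transposing Beukers' argument to the \antie-setting: the point of departure is that any \antie-function is annihilated by an $E$-operator whose only singularities lie at $0$ and $\infty$, so formally dividing by $z-1$ presents no local obstruction at the regular point $z=1$, and one expects the conjugate operator to remain an $E$-operator. The main difficulty, I expect, lies in showing that the resulting quotient retains arithmetic control simultaneously on its Taylor coefficients at $0$ (the $G$-function side) and on the $1$-summation of its divergent expansion along $\theta$: unlike the $E$-function case where the function is entire, one has to track how the Stokes phenomena at $\infty$ interact with the passage through $z=1$ along $\theta$, and the existing $E$-operator machinery seems to offer only partial tools for this.
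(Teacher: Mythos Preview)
The statement you are addressing is labelled and treated in the paper as a \emph{conjecture}; there is no proof of it in the paper. What the paper does prove (in \S\ref{ssec:proofprop1}) is the forward implication that Conjecture~\ref{conj1} entails Conjecture~\ref{conjantie} and Conjecture~\ref{conj3}, together with Proposition~\ref{prop1}, which says that \emph{if} Conjecture~\ref{conj1} holds for a given $\Psi=F+\fgo(1/\cdot)$ then $F(1)$ and $\fgo_\theta(1)$ are algebraic. Your proposal, by contrast, purports to prove Conjecture~\ref{conj1} from Conjecture~\ref{conjantie} plus Theorem~\ref{theoe}$(i)$, hence to make the two conjectures equivalent. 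This is not established in the paper, and your argument does not establish it either.

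The concrete gap is the step ``set $\tilde\fgo(w):=c+\fgo(w)$; modifying the constant term by $c\in\Qbar$ preserves the $G$-function condition, so $\tilde\fgo$ is again an \antie-function''. Here $c=F(1)$ is a value of an $E$-function at $1$; it lies in $\EE$, but you have no reason to assume it lies in $\Qbar$. An \antie-function by definition has Taylor coefficients in $\Qbar$, so unless $c$ is algebraic $\tilde\fgo$ is not an \antie-function and Conjecture~\ref{conjantie} does not apply to it. The algebraicity of $F(1)$ is precisely the content of Proposition~\ref{prop1}, which is a \emph{consequence} of Conjecture~\ref{conj1}; inserting it as an input makes the argument circular. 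Indeed, if your reduction were valid, Conjecture~\ref{conjantie} alone would already imply $\EE\cap{\bf D}=\Qbar$ (Conjecture~\ref{conj3}), whereas the paper treats these as independent hypotheses (see Proposition~\ref{proppasdansE}). So the claimed equivalence between Conjectures~\ref{conj1} and~\ref{conjantie} is not established, and in fact your reduction encodes exactly the extra strength that Conjecture~\ref{conj1} carries over Conjecture~\ref{conjantie}.
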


The conclusion of this conjecture is that $\Psi(z) = (z-1)   \Psi_1(z)$ for some mixed function $  \Psi_1$. This conclusion can be made more precise as follows; see \S\ref{ssec:proofprop1} for the proof.

\begin{prop}\label{prop1} Let $\Psi(z) = F(z)+\fgo(1/z)$ be an mixed function, and $\theta\in (-\pi/2,\pi/2)$ be such that $\Psi_\theta(1)=0$.
Assume that Conjecture \ref{conj1} holds for $\Psi$ and $\theta$. 

Then both $F(1)$ and $\fgo_\theta(1)$ are algebraic, and $\frac{  \fgo(1/z)  - \fgo_\theta(1) }{z-1} $ is an \antie-function.
\end{prop}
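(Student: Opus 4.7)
The plan is to apply Conjecture~\ref{conj1} directly and then read off the three conclusions from the resulting $E$-/\antie-decomposition.

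By Conjecture~\ref{conj1}, $\Psi(z)/(z-1)$ is a mixed function, so I would write
$$\frac{\Psi(z)}{z-1} = G(z) + \gsoul(1/z)$$
with $G$ an $E$-function and $\gsoul$ an \antie-function, normalized by $\gsoul(0)=0$ to make the decomposition unique. Multiplying back gives
$$F(z) + \fgo(1/z) = (z-1)G(z) + (z-1)\gsoul(1/z).$$
The key observation is a Laurent expansion: writing $\gsoul(w) = \sum_{n\geq 1} g_n w^n$ with $g_n \in \Qbar$ (since \antie-functions have algebraic Taylor coefficients by definition), a direct computation gives
$$(z-1)\gsoul(1/z) = g_1 + \sum_{m\geq 1}(g_{m+1}-g_m)\, z^{-m},$$
which splits naturally into a constant belonging to the ``$E$-part'' plus an \antie-series vanishing at infinity.

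Next, I would invoke the uniqueness of the $E$-/\antie-decomposition of a mixed function (stated just after Definition~\ref{def:mixedfn}) and match parts on both sides of the identity for $\Psi$ to obtain
$$F(z) = (z-1)G(z) + g_1 \qquad \text{and} \qquad \fgo(1/z) = (z-1)\gsoul(1/z) - g_1.$$
Setting $z=1$ in the first identity yields $F(1) = g_1 \in \Qbar$, the first algebraicity claim. The hypothesis $\Psi_\theta(1) = F(1) + \fgo_\theta(1) = 0$ then forces $\fgo_\theta(1) = -g_1 \in \Qbar$, the second claim.

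Finally, rearranging the second identity and using $g_1 = -\fgo_\theta(1)$ gives
$$\gsoul(1/z) = \frac{\fgo(1/z) + g_1}{z-1} = \frac{\fgo(1/z) - \fgo_\theta(1)}{z-1},$$
so $(\fgo(1/z)-\fgo_\theta(1))/(z-1)$ is precisely the \antie-function $\gsoul$ evaluated at $1/z$, proving the last assertion. I do not anticipate any serious obstacle: the argument is a direct book-keeping of the Laurent expansion, relying only on Conjecture~\ref{conj1}, the already-established uniqueness of the mixed decomposition, and the elementary algebraicity of Taylor coefficients of \antie-functions.
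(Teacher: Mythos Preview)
Your proposal is correct and follows essentially the same route as the paper: both arguments multiply out $(z-1)\Psi_1(z)$, isolate the algebraic constant term produced by $(z-1)\gsoul(1/z)$ (your $g_1$, the paper's $-\alpha$), and then match the $E$- and \antie-parts using the uniqueness of the mixed decomposition before evaluating at $z=1$. The only cosmetic difference is that you compute the Laurent expansion of $(z-1)\gsoul(1/z)$ explicitly, while the paper simply names the constant term and argues with the residual \antie-series $\fgo_2$; note that your matching step tacitly uses the normalization $\fgo(0)=0$ (otherwise $F(z)=(z-1)G(z)+g_1$ is off by the constant $\fgo(0)$), which is harmless but worth stating.
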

Of course, in the conclusion of this proposition, one may assert also that $\frac{  F(z)  -F(1) }{z-1} $  is an $E$-function using Theorem \ref{theoe}$(i)$.

Conjecture \ref{conj1} already has far reaching Diophantine consequences: Conjecture \ref{conjantie} and Theorem \ref{prop3} stated in the introduction, and also the following   result that encompasses Theorem \ref{theoantie} in the linear case.

\begin{theo}\label{theomixte} 
Assume that Conjecture \ref{conj1}  holds.

Let ${\bf \Psi}(z):={}^t(\Psi_1(z), \ldots, \Psi_n(z))$ be a vector of mixed functions  solution of a differential system ${\bf \Psi}'(z)=A(z){\bf \Psi}(z)$ for some matrix $A(z)\in M_n(\Qbar(z))$.  Let $\xi\in\Qbar\etoile$ and $\theta\in (\arg(\xi)-\pi/2,\arg(\xi)+\pi/2)$ ; assume that $\xi$ is not a pole of a coefficient of $A$, and that $\theta$ is anti-Stokes for none of the $\Psi_j$. 

Let $\lambda_1,\ldots, \lambda_n  \in\Qbar $ be  such that 
$$\sum_{i=1}^n \lambda_i 
 \Psi_{i,\theta}(\xi) =0.$$
Then there exist $L_1,\ldots, L_n \in \Qbar[z]$  such that 
$$\sum_{i=1}^n L_i(z) \Psi_i(z) = 0 \mbox{ identically  and } L_i(\xi) = \lambda_i  \mbox{ for any } i.$$
In particular, we have
$$\rk_{\Qbar}( \Psi_{1,\theta}(\xi), \ldots, \Psi_{n,\theta}(\xi)) = 
\rk_{\Qbar(z)}( \Psi_{1 }(z), \ldots, \Psi_{n}(z)). $$
\end{theo}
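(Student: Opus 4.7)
I would adapt Beukers' proof of the linear case of Theorem \ref{theoe}(ii), with Conjecture \ref{conj1} playing the role of Theorem \ref{theoe}(i). The argument proceeds in three steps.

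First, I would reduce to the case $\xi = 1$ by the rescaling $z \mapsto \xi z$: the functions $\widetilde{\Psi}_i(z) := \Psi_i(\xi z)$ are again mixed (the $E$- and \antie-parts being stable under dilation by an element of $\Qbar^\etoile$), they satisfy a linear system with $\Qbar(z)$-coefficients, and $\widetilde{\theta} := \theta - \arg\xi \in (-\pi/2, \pi/2)$ is non-anti-Stokes for them with $\widetilde{\Psi}_{i,\widetilde{\theta}}(1) = \Psi_{i,\theta}(\xi)$. A polynomial identity in the rescaled variables pulls back to one in the original $\Psi_i$ after clearing a harmless denominator involving powers of $\xi$.

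Second, I would reduce by induction on $n$ to the case where $\Psi_1,\ldots,\Psi_n$ are $\Qbar(z)$-linearly independent. If $\Psi_n = \sum_{i<n} R_i(z) \Psi_i$ with $R_i \in \Qbar(z)$ (chosen so that none of the $R_i$ has a pole at $1$, possibly after further preparation), the hypothesis becomes $\sum_{i<n} (\lambda_i + \lambda_n R_i(1))\Psi_{i,\theta}(1) = 0$; the inductive hypothesis yields $\widetilde{L}_i \in \Qbar[z]$ with $\widetilde{L}_i(1) = \lambda_i + \lambda_n R_i(1)$ and $\sum_{i<n} \widetilde{L}_i \Psi_i \equiv 0$. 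Combining these with the identity $\sum_{i<n} R_i(z) \Psi_i - \Psi_n \equiv 0$ multiplied by a common polynomial denominator $q(z)$ normalized so that $q(1) = 1$ produces the desired $L_i \in \Qbar[z]$. In the remaining $\Qbar(z)$-independent case, the statement of the theorem is equivalent to the $\Qbar$-linear independence of the values $\Psi_{i,\theta}(1)$.

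The third step establishes this linear independence by contradiction. Suppose $\sum \mu_i \Psi_{i,\theta}(1) = 0$ with $\mu_i \in \Qbar$ not all zero; then $\Lambda := \sum \mu_i \Psi_i$ is a nonzero mixed function (by $\Qbar(z)$-independence) with $\Lambda_\theta(1) = 0$. Iterating Conjecture \ref{conj1}, I find the maximal integer $k \ge 1$ such that $\Lambda = (z-1)^k \Lambda_k$ for a mixed $\Lambda_k$; by Lemma \ref{lemunicite}, $k$ is finite and $\Lambda_{k,\theta}(1) \ne 0$. Using $\Psi' = A\Psi$ together with the fact that differentiation commutes with $\theta$-summation, each derivative $\Lambda^{(\ell)}$ equals $\sum_j \mu_j^{(\ell)}(z) \Psi_j(z)$ for rational $\mu_j^{(\ell)} \in \Qbar(z)$ built iteratively from $A$; the vanishings $\Lambda^{(\ell)}_\theta(1) = 0$ for $\ell = 0, \ldots, k-1$ produce $k$ additional $\Qbar$-linear relations $\sum_j \mu_j^{(\ell)}(1)\, \Psi_{j,\theta}(1) = 0$.

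The main obstacle --- the delicate part of Beukers' method --- is to extract a contradiction from the accumulation of these relations. The strategy is a rank argument: one shows that the $\Qbar$-span of the coefficient vectors $(\mu_j^{(\ell)}(1))_j \in \Qbar^n$ for $\ell = 0, \ldots, k-1$ is large enough that, combined with the nonvanishing of $\Lambda_{k,\theta}(1)$ and with the rigidity coming from the $E$-operator annihilating the $\Psi_j$'s, one can reconstruct a nontrivial $\Qbar(z)$-linear relation among the $\Psi_j$'s, contradicting the hypothesis. Transposing this rank/Wronskian-type analysis from Beukers' $E$-function setting to the mixed-function framework --- where both the $E$-part and the formal part at infinity coming from the \antie-factor must be tracked simultaneously --- constitutes the principal technical difficulty of the proof.
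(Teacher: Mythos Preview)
Your outline is correct and matches the paper's approach: both reduce to $\xi=1$, invoke Conjecture~\ref{conj1} in place of Theorem~\ref{theoe}(i), and then run the linear part of Beukers' argument from \cite[\S 3]{beukers}; the paper simply states that the proof ``follows exactly'' that of Theorem~\ref{theoantie} with \antie-functions replaced by mixed functions, and your three steps flesh this out. Your use of Lemma~\ref{lemunicite} to bound $k$ is fine: since $\Lambda\neq 0$ formally, $\Lambda_\theta$ is a nonzero holomorphic function in the sector, hence has a zero of finite order at $1$.

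One comment on your last paragraph: you overstate the difficulty of the transposition. Beukers' rank argument in \cite[\S 3]{beukers} uses only that the $\Psi_i$ satisfy a first-order system over $\Qbar(z)$, that $\xi$ is a regular point of $A$, and that the analogue of Theorem~\ref{theoe}(i) (here Conjecture~\ref{conj1}) is available. It never looks inside the functions, so there is no need to ``track the $E$-part and the \antie-part simultaneously''; the mixed functions are treated as opaque solutions of the system, and the argument goes through verbatim. The genuine work is entirely contained in Conjecture~\ref{conj1} itself, not in adapting the subsequent linear algebra.
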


The proof of Theorem \ref{theomixte} follows exactly the linear part of the proof of Theorem \ref{theoantie} (see \S \ref{sec:proofsthm1}), which is based on \cite[\S 3]{beukers}. The only difference is that \antie-functions have to be replaced with mixed functions, and Conjecture \ref{conjantie} with Conjecture \ref{conj1}. In particular Proposition  \ref{propintermed} stated in \S \ref{sec:proofsthm1} remains valid with these modifications. 

However a product of mixed functions is not, in general, a mixed function. Therefore the end of  \cite[\S 3]{beukers} does not adapt to mixed functions, and there is no hope to obtain in this way a result on the transcendence degree of a field generated by values of mixed functions.

 As an application of Theorem~\ref{theomixte}, we can consider the mixed functions $1, e^{\beta z}$ and $\fgo(1/z):=\sum_{n=0}^\infty (-1)^n n! z^{-n}$, where $\beta$ is a fixed non-zero algebraic number. These three functions are linearly independent over $\mathbb C(z)$ and form a solution of a differential system with only 0 for singularity (because $(\fgo(1/z))'=(1+1/z)f(1/z)-1$), hence for any $\alpha\in \Qbar$, $\alpha>0$ and any $\varrho \in \Qbar^*$, the numbers $1, e^{\varrho}, \fgo_0(1/\alpha):=\int_0^\infty e^{-t}/(1+\alpha t)dt$ are $\Qbar$-linearly independent (for a fixed $\alpha$, take $\beta=\varrho/\alpha$).

\subsection{Values of mixed functions}\label{ssec:conseq2}

We denote by $\mixed$ the set of values $\Psi_\theta(1)$, where $\Psi$ is a mixed function and $\theta=0$  if it is not anti-Stokes, $\theta>0$ is sufficiently small otherwise. 
This set is obviously equal to ${\bf E}+{ \bf D}$.

\begin{prop} \label{propderigam}
For every integer $s\ge 0$ and every $a\in \mathbb Q^+$, $a\neq 0$, we have  $\Gamma^{(s)}(a)\in e^{-1} \mixed$.
\end{prop}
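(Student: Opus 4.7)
The plan is to split $\Gamma^{(s)}(a) = \int_0^\infty (\log t)^s t^{a-1} e^{-t}\,dt$ at $t=1$ and handle each piece separately. Multiplying by $e$,
$$
e\,\Gamma^{(s)}(a) = e\int_0^1 (\log t)^s t^{a-1} e^{-t}\,dt + e \int_1^\infty (\log t)^s t^{a-1} e^{-t}\,dt,
$$
I will show the second summand lies in $\mathbf{D}$ and the first in $\EE$, which gives $\Gamma^{(s)}(a) \in e^{-1}(\EE + \mathbf{D}) = e^{-1}\mixed$.

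For the integral over $[1,\infty)$, I substitute $t = 1+u$ to obtain $\int_0^\infty h(u) e^{-u}\,du$, where $h(u) := (\log(1+u))^s (1+u)^{a-1}$ is a $G$-function in $u$ (a product of the two $G$-functions $(\log(1+u))^s$ and $(1+u)^{a-1}$, valid since $a-1\in\Q$). Writing its Taylor expansion at $0$ as $h(u) = \sum_n c_n u^n$, the series $\fgo_s(z) := \sum_n n! c_n z^n$ is an \antie-function whose Borel transform is $h$. Its only finite singularity being at $u = -1$, the direction $\theta = 0$ is not anti-Stokes for $\fgo_s$, and the Borel--Laplace formula gives $\fgo_{s,0}(1) = \int_0^\infty h(u) e^{-u}\,du$, identifying this piece with an element of $\mathbf{D}$.

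For the integral over $(0,1]$, I expand $e^{-t}$ as a power series and apply $\int_0^1 (\log t)^s t^{a+m-1}\,dt = (-1)^s s!/(a+m)^{s+1}$ to obtain
$$
\int_0^1 (\log t)^s t^{a-1} e^{-t}\,dt = (-1)^s s! \sum_{m \geq 0} \frac{(-1)^m}{m!(a+m)^{s+1}}.
$$
Multiplying by $e = \sum_{j \geq 0} 1/j!$ and regrouping the double sum as a Cauchy product (with $n = m+j$) yields
$$
e \int_0^1 (\log t)^s t^{a-1} e^{-t}\,dt = E_s(1), \quad E_s(z) := \sum_{n \geq 0}\frac{b_n^{(s)}}{n!} z^n,
$$
with $b_n^{(s)} := (-1)^s s! \sum_{m=0}^n \binom{n}{m}\frac{(-1)^m}{(a+m)^{s+1}}$. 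I claim $E_s$ is an $E$-function. Writing $a = p/q$, the arithmetic Siegel conditions follow from the trivial bound $|b_n^{(s)}| \leq s!\,q^{s+1}\,2^n$ together with the fact that the common denominator of $b_0^{(s)},\ldots,b_n^{(s)}$ divides $\lcm(p, p+q, \ldots, p+nq)^{s+1}$, which is at most $C^n$ by an elementary Chebyshev-type bound. Holonomy follows by induction on $s$: the coefficient recurrence for $E_0$ translates into the ODE $z E_0'(z) + (a-z)E_0(z) = 1$, and differentiating in $a$ yields $z E_s'(z) + (a-z) E_s(z) = -s E_{s-1}(z)$, so $E_s$ is holonomic once $E_{s-1}$ is. Hence $E_s(1) \in \EE$, concluding the proof.

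The main obstacle is the arithmetic control of the coefficients $b_n^{(s)}$: without the exponential bound on $\lcm$ over an arithmetic progression, the denominators would grow faster than any geometric rate and $E_s$ would only be an arithmetic Gevrey series of higher order, not an $E$-function. Beyond that, everything else — the identification of the tail integral as a Borel--Laplace sum and the closure properties of $G$-, $E$- and \antie-functions — is routine.
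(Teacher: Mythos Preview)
Your proof is correct and follows essentially the same route as the paper: both split $\Gamma^{(s)}(a)$ at $t=1$, identify $e$ times the tail with the value at $1$ of the \antie-function built from the $G$-function $(1+u)^{a-1}\log(1+u)^s$ (hence an element of $\mathbf{D}$), and identify $e$ times the initial piece as an $E$-value. The only cosmetic difference is that the paper recognises your $E_s(z)$ directly as $(-1)^s s!\, e^{z} E_{a,s+1}(-z)$ with $E_{a,j}(z)=\sum_{n\geq 0} z^n/(n!(n+a)^j)$, a product of two $E$-functions, which spares your explicit check of Siegel's arithmetic conditions and of holonomy.
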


This results follows immediately from 
  Eq. \eqref{eq:gammasa} below (see   \S \ref{ssec:proofprop3}), written in the form
\begin{equation*}
    \Gamma^{(s)}(a)=e^{-1}\big((-1)^se s!  E_{a,s+1}(-1)+\fgo_{a,s+1;0}(1)\big),
\end{equation*}
because $e^zE_{a,s+1}(-z)$ is an $E$-function and $\fgo_{a,s+1;0}(1)$ is the 1-summation in the direction $0$ of an \tantie-function. 

\bigskip

It would be interesting to know if   $\Gamma^{(s)}(a) $ belongs to $ \mixed$. We did not succeed in proving it does, and we believe it does not. Indeed, for instance if we want to prove that $\gamma\in\mixed$, a natural strategy would be to construct an $E$-function $F(z) $ with asymptotic expansion of the form $\gamma+\log(z) + \fgo(1/z)$ in a large sector, and then to evaluate at $z=1$. However this strategy cannot work since there is no such $E$-function (see the footnote in the proof of Lemma \ref{lemunicite} in \S \ref{ssec:proofprop1}).

\subsection{Proofs concerning mixed functions}\label{ssec:proofprop1}

To begin with, let us take Proposition~\ref{prop1} for granted and prove that Conjecture \ref{conj1}   implies both Conjecture \ref{conj3} and Conjecture \ref{conjantie}. Concerning
Conjecture \ref{conjantie} it is clear. To prove that it implies Conjecture \ref{conj3}, let  $\xi\in {\bf D}$, i.e. $\xi=\fgo_{\theta}(1)$ is the 1-summation of an $\antie$-function $\fgo(z)$ in  the direction $\theta=0$ if it is not anti-Stokes, and $\theta>0 $ close to 0 otherwise.  Assume that $\xi$ is also in $\EE$:  we  have $\xi=F(1)$ for some $E$-function $F(z)$. Therefore, $\Psi(z)=F(z)-\fgo(1/z)$ is a mixed  function such that $\Psi_{\theta}(1)=0$. By Conjecture \ref{conj1} and Proposition~\ref{prop1}, we have $\xi=\fgo_{\theta}(1)\in \Qbar$. This concludes the proof that Conjecture \ref{conj1}   implies   Conjecture \ref{conj3}. 

\bigskip

Let us prove Proposition~\ref{prop1} now.  Assuming that Conjecture \ref{conj1} holds for $\Psi$ and $\theta$, there exists a mixed function $ \Psi_1(z)=F_1(z)+\fgo_1(1/z)$ such that $ \Psi(z) =(z-1)\Psi_1(z)$. 
We have
\begin{equation}\label{eqpreuvelem}
 F(z)-(z-1) F_1(z)+  \fgo (1/z) - (z-1)  \fgo_{1} (1/z)  = 0
 \end{equation}
as a formal power series in $z$ and $1/z$.
 Now notice that $z-1 = z(1-\frac1z)$, and  that we may assume   $\fgo $ and $\fgo_{1 }$ to have zero constant terms.
Denote by $\alpha $ the constant term of  $ \fgo (1/z) -  z(1-\frac1z) \fgo_{1}(1/z)$. Then we have 
$$  F(z)-(z-1) F_1(z)+ \alpha + \fgo_{2}(1/z)  =0$$
for some  \antie-function $\fgo_{2}$  without constant term, so that $\fgo_{2 } = 0$,  $ F(z) = (z-1) F_1(z)-  \alpha $ and $F(1) = -  \alpha\in\Qbar$.  This implies $\fgo_\theta(1) =   \alpha$, and  $\frac{  \fgo(1/z)  - \fgo_\theta(1) }{z-1}  =\fgo_{1 } (1/z) $ is an \antie-function since $\fgo_{2 } = 0$.
This  concludes the proof of Proposition \ref{prop1}.

\bigskip

At last, let us prove Lemma \ref{lemunicite}.
 We write  $\Psi(z) = F(z)+\fgo(1/z)$ and assume that $\Psi_\theta$ is identically zero.
Modifying $\theta$ slightly if necessary, we may assume that the asymptotic expansion $-\fgo(1/z)$ of $F(z)$ in a  large sector bisected by $\theta$ is given explicitly by \cite[Theorem~5]{ateo} applied to $F(z)-F(0)$; recall that such an asymptotic expansion is unique (see   \cite{ateo}). As in \cite{ateo} we let $g(z) = \sum_{n=1}^\infty a_n z^{-n-1}$ where the coefficients $a_n$ are given by $F(z)-F(0)= \sum_{n=1}^\infty \frac{a_n}{n!} z^n$.   For any $\sigma \in \C \setminus \{  0 \}$ there is no contribution in $e^{\sigma z}$ in the asymptotic expansion of $F(z)$, so that $g(z)$ is holomorphic at $\sigma$. At $\sigma=0$, the local expansion of $g$ is of the form $g(z) = h_1(z) +  h_2(z)\log(z)$ with $G$-functions $h_1$ and $h_2$, and  the coefficients of $h_2$ are related to those of $\fgo$; however we shall not use this special form~(\footnote{Actually we are proving that the asymptotic expansion of a non-polynomial $E$-function is never a $\C$-linear combination of functions $z^\alpha \log^k(z) \fgo(1/z)$ with $\alpha\in\Q$, $k\in\N$ and \antie-functions $\fgo$: some exponentials have to appear.}). Now recall that $g(z) = G(1/z)/z$ where $G$ is a $G$-function; then $G$ is entire and   has moderate growth at infinity (because $\infty$ is a regular singularity of $G$), so  it is a polynomial due to Liouville's theorem. This means that $F(z)$ is a polynomial in $z$. Recall that  asymptotic expansions in large sectors are unique. Therefore both $F$ and  $\fgo   $ are  constant functions, and $F+\fgo=0$. This concludes the proof of Lemma \ref{lemunicite}.

\section{Proof of Theorem \ref{theo:GpolyEantiE}: values of $G$-functions}\label{sec:proofthmpolyGEantiE}

In this section we prove Theorem \ref{theo:GpolyEantiE}. Let us begin with an example, starting with the relation proved in \cite[Proposition 1]{Michigan} for $z\in\C\setminus (-\infty,0]$: 
\begin{equation}\label{eq:gamma0701} 
    \gamma+\log(z)= z E_{1,2}(-z)-e^{-z} \fgo_{1,2;0}(1/z)
\end{equation}
where $E_{1,2}$ is an $E$-function,  and $\fgo_{1,2} $ is an \antie-function, both defined below in \S \ref{ssec:proofprop3}.

Apply Eq. \eqref{eq:gamma0701} at both $z$ and $2z$, and then substract one equation from the other. This provides a relation of the form 
\begin{equation} \label{eq:log(2)EantiE}
\log(2) = F(z) + e^{-z}  \fgo_{1;0} ( 1/z) + e^{-2z}  \fgo_{2;0} ( 1/z) 
\end{equation}
valid in a large sector bisected by $0$, with an $E$-function $F$ and \antie-functions $\fgo_1$ and $\fgo_2$. Choosing arbitrarily a positive real algebraic value of $z $ yields an explicit expression of $\log(2)\in\GG$ as a multivariate polynomial in elements of $\EE$ and  {\bf D}. But this example shows also that a polynomial in $E$- and \antie-functions may be constant eventhough there does not seem to be any obvious reason. In particular, 
 the functions $1$, $F(z)$, $ e^{-z}  \fgo_{1;0} ( 1/z)$, and $ e^{-2z}  \fgo_{2;0}  ( 1/z) $ are linearly dependent over $\C$. However we see no reason why they would be linearly dependent over $\Qbar$. This could be a major drawback to combine in  $E$- and \antie-functions, since functions that are linearly dependent over $\C$ but not over $\Qbar$ can not belong to any Picard-Vessiot extension  over $\Qbar$.   

\bigskip

Let us come now to the proof of Theorem \ref{theo:GpolyEantiE}. We first prove the second part, which runs as follows (it is reproduced from the unpublished note \cite{rivoalnote}). 

From the stability of the class of $E$-functions by $\frac{d}{dz}$ and $\int_0^z$, we deduce that the set of convergent integrals $\int_0^\infty F(x) dx$ of $E$-functions and the set of finite limits of $E$-functions along some direction as $z\to \infty$ are the same. Theorem~2$(iii)$ in \cite{ateo} implies that if an $E$-function has a finite limit as $z\to \infty$ along some direction, then this limit must be in ${\bf G}$. Conversely, let $\beta\in {\bf G}$. By Theorem 1 in \cite{gvalues}, there exists a $G$-function $G(z)=\sum_{n=0}^\infty a_n z^n$ of radius of convergence $\ge 2$ (say) such that $G(1)=\beta$. Let $F(z)=\sum_{n=0}^\infty \frac{a_n}{n!} z^n$ be the associated $E$-function. Then for any $z$ such that $\textup{Re}(z)>\frac12$, we have
$$
\frac{1}{z}G\Big(\frac{1}{z}\Big)=\int_0^{+\infty} e^{-xz}F(x)dx.
$$
Hence, $\beta=\int_0^{+\infty} e^{-x}F(x)dx$ where $e^{-z}F(z)$ is an $E$-function.

\medskip

We shall now prove the first part of Theorem \ref{theo:GpolyEantiE}. In fact, we shall prove a slightly more general result, namely Theorem \ref{th5} below.  We first recall a few notations. Denote by 
$\bfS$  the $\GG$-module generated by all derivatives $\Gamma^{(s)}(a)$ (with $s\in\N$ and $a\in\Q\setminus\Z_{\leq 0}$), and by $\VV$ the $\bfS$-module generated by $\EE$.  Recall that $\GG$, $\bfS$ and $\VV$ are rings. Conjecturally, $\GG = \calP[1/\pi]$ and $\VV =  \calP_e[1/\pi]$  where $\calP$ and $\calP_e$ are the ring of periods and the ring of exponential periods over $\Qbar$ respectively (see \cite[\S 2.2]{gvalues} and \cite[\S 4.3]{fuchsien}).   We have proved in \cite[Theorem 3]{fuchsien} that $\VV$ is the $\bfS$-module generated by the numbers $e^\rho \chi$, with $\rho\in\Qbar$ and $\chi\in $ {\bf D}.

\begin{theo}\label{th5}
The ring $\VV$ is the ring generated by $\EE$ and  {\bf D}. In particular, all  values of $G$-functions belong to the ring generated by $\EE$ and  {\bf D}. 
\end{theo}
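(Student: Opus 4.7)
My plan is to deduce Theorem~\ref{th5} from the description of $\VV$ given in \cite[Theorem~3]{fuchsien} (namely, that $\VV$ is the $\bfS$-module generated by the numbers $e^\rho\chi$ with $\rho\in\Qbar$ and $\chi\in{\bf D}$) together with Proposition~\ref{propderigam}, by proving two inclusions between $\VV$ and the ring $R$ generated by $\EE$ and ${\bf D}$.

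The inclusion $R\subset\VV$ is immediate: $1=\Gamma(1)\in\bfS$ yields $\EE\subset\VV$ directly from the definition of $\VV$; the case $\rho=0$ in \cite[Theorem~3]{fuchsien} gives ${\bf D}\subset\VV$; and since $\VV$ is a ring, $R\subset\VV$. For the reverse inclusion, each element of $\VV$ writes as a finite sum $\sum_i s_i\,e^{\rho_i}\chi_i$ with $s_i\in\bfS$, $\rho_i\in\Qbar$ and $\chi_i\in{\bf D}$. The factors $e^{\rho_i}\chi_i$ already belong to $R$ (because $e^{\rho_i}\in\EE$), so $\VV\subset R$ amounts to $\bfS\subset R$. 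Recalling that $\bfS$ is the ring generated by $\GG$ and the numbers $\Gamma^{(s)}(a)$, Proposition~\ref{propderigam} gives $\Gamma^{(s)}(a)\in e^{-1}(\EE+{\bf D})\subset R$ for all $a\in\Q^+$ and $s\geq 0$ (using $e^{-1}\in\EE$), and the functional equation $\Gamma(a+k)=a(a+1)\cdots(a+k-1)\,\Gamma(a)$ differentiated $s$ times reduces the case of negative non-integer $a$ to that of positive rationals. Thus everything reduces to proving $\GG\subset R$, which is also the ``in particular'' assertion of the theorem.

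To show $\GG\subset R$, I would use the integral representation established just above: any $\beta\in\GG$ can be written $\beta=\int_0^{+\infty}\tilde F(x)\,dx=\lim_{z\to\infty}H(z)$ along some algebraic direction $\theta$, where $H(z):=\int_0^z \tilde F(x)\,dx$ is itself an $E$-function. By the resurgent structure of $E$-functions (\cite[Theorem~5]{ateo}) the function $H$ matches, in a large sector bisected by $\theta$, an identity
$$H(z)=\sum_\ell\lambda_\ell\,e^{\alpha_\ell z}\,z^{\mu_\ell}(\log z)^{k_\ell}\,\fgo_{\ell,\theta}(1/z),$$
with $\lambda_\ell\in\Qbar$, $\alpha_\ell\in\Qbar$, $\mu_\ell\in\Q$, $k_\ell\in\N$ and \antie-functions $\fgo_\ell$ (the exact equality coming from $1$-summation). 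Finiteness of $\beta=\lim_{z\to\infty}H(z)$ along $\theta$ forces cancellation of the exponentially growing contributions and vanishing of the decaying ones in the limit. Moving these contributions to the right-hand side and evaluating at a conveniently chosen algebraic $z=\xi$ then expresses $\beta$ as a finite $\Qbar$-linear combination of numbers $e^{\alpha_\ell\xi}\fgo_{\ell,\theta}(1/\xi)$, each of which lies in $R$ since $e^{\alpha_\ell\xi}\in\EE$ and $\fgo_{\ell,\theta}(1/\xi)\in{\bf D}$; hence $\beta\in R$.

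The main obstacle is precisely this last step: controlling the coefficients in the resurgent expansion of $H$ and the way the various blocks recombine, so that $\beta$ lands cleanly in $R$ rather than in the a priori larger ring $\bfS$. The example preceding the theorem statement is the model: two copies of~\eqref{eq:gamma0701} evaluated at $z$ and $2z$ are subtracted to produce~\eqref{eq:log(2)EantiE}, in which the unwanted Stokes constant $\gamma$ cancels and $\log 2$ is visibly a polynomial in elements of $\EE$ and ${\bf D}$. Any residual ``$\gamma$-like'' constants that survive an intermediate stage of the general argument are themselves already in $R$ by Proposition~\ref{propderigam}, so an inductive cleanup of the resurgent expansion should yield the required expression $\beta\in R$ and complete the proof of Theorem~\ref{th5}.
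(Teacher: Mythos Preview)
Your reduction to proving $\GG\subset R$ is correct and matches the paper's argument. The gap is in that last step. In the asymptotic identity
\[
H(z)=\sum_\ell\lambda_\ell\,e^{\alpha_\ell z}\,z^{\mu_\ell}(\log z)^{k_\ell}\,\fgo_{\ell,\theta}(1/z),
\]
the Stokes constants $\lambda_\ell$ furnished by \cite[Theorem~5]{ateo} are \emph{not} algebraic: they are in $\bfS$, as the $\gamma$ in~\eqref{eq:gamma0701} already shows. Your sentence ``with $\lambda_\ell\in\Qbar$'' is therefore false in general, and you implicitly acknowledge this in your final paragraph. The proposed fix --- that residual $\gamma$-like constants lie in $R$ by Proposition~\ref{propderigam}, so an ``inductive cleanup'' works --- is circular: elements of $\bfS$ are $\GG$-linear combinations of the $\Gamma^{(s)}(a)$, and the $\GG$-coefficients are exactly what you are trying to place in $R$. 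Nothing in your argument prevents an arbitrary element of $\GG$ from reappearing as a Stokes constant at this stage, so no induction can get off the ground.

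The paper avoids this obstacle altogether. Starting from \cite[Theorem~3]{probsiegel}, one realizes $\xi\in\GG$ as the coefficient of $e^z$ in the asymptotic expansion of some $E$-function $F$. Writing this expansion in Andr\'e's basis $(H_1,\ldots,H_\mu)$ of formal solutions at infinity of an $E$-operator $L$ annihilating $F$, one has $\xi=\sum_j\kappa_j c_j$ with $c_j\in\Qbar$, so it suffices to show each connection constant $\kappa_j\in R$. The key device is the Wronskian formula
\[
\kappa_1=\frac{\det W(F,H_{2,\theta},\ldots,H_{\mu,\theta})}{\det W(H_{1,\theta},\ldots,H_{\mu,\theta})}.
\]
The denominator is known (from the proof of \cite[Theorem~6]{fuchsien}) to equal $c\,z^b e^{az}$ with $a,b,c\in\Qbar$, $c\neq 0$; evaluating at $z=1$ then gives $\kappa_1=c^{-1}e^{-a}\cdot(\text{numerator at }z=1)$. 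Each entry of the numerator is a derivative of $F$ (an $E$-function) or of some $H_{j,\theta}$ (a $\Qbar$-linear combination of terms $e^{\rho z}z^\mu(\log z)^k\fgo_\theta(1/z)$), so its value at $z=1$ lies in $R$. This Wronskian trick is precisely what replaces the would-be ``inductive cleanup'' and sidesteps the transcendence of the Stokes constants.
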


In other words,  the elements of $\VV$ are exactly the sums of products $ab$ with $a\in\EE$ and $b \in  {\bf D}$.

\begin{proof}[Proof of Theorem \ref{th5}] 
We already know that $\VV$ is a ring, and that it contains $\EE$ and  {\bf D}. To prove the other inclusion, denote by $U$ the ring generated by $\EE$ and  {\bf D}. Using Proposition~\ref{propderigam} proved in \S \ref{ssec:conseq2} and the functional equation of $\Gamma$, we have $\Gamma^{(s)}(a) \in U $ for any  $s\in\N$ and any $a\in\Q\setminus\Z_{\leq 0}$.   Therefore for proving that $\VV\subset U$, it is enough to prove that $\GG  \subset U$.

Let $\xi\in \GG$. Using \cite[Theorem 3]{probsiegel} there exists an $E$-function $F(z)$
such that for any for any $\theta\in[-\pi,\pi)$ 
outside a finite set, $\xi$  is a coefficient of the asymptotic expansion of $F(z)$ in a large sector
bisected by  $\theta$. As the proof of  \cite[Theorem 3]{probsiegel}  shows, we can assume that $\xi $ is the coefficient of $e^z$ in this expansion. 

Denote by $L$ an $E$-operator of which $F$ is a solution, and by $\mu$ its order. André has proved \cite{andre} that there exists a basis $(H_1(z),\ldots,H_\mu(z))$ of formal solutions of $L$ at infinity such that for any $ j$, $e^{-\rho_j z}H_j(z)\in {\rm NGA}\{1/z\}_1^{\Qbar}$ for some algebraic number $\rho_j$. We recall that elements of $ {\rm NGA}\{1/z\}_1^{\Qbar}$ are arithmetic Nilsson-Gevrey series of order 1 with algebraic coefficients, i.e. $\Qbar$-linear combinations of functions $z^k (\log z)^\ell \fgo(1/z)$ with $k\in\Q$, $\ell\in\N$ and \antie-functions $\fgo$. Expanding   in this basis the asymptotic expansion of $F(z)$ in a large   sector
bisected by  $\theta$ (denoted by $\widetilde F$), there exist complex numbers $\kappa_1$, \ldots, $\kappa_d$ such that $\widetilde F(z)= \kappa_1 H_1(z) + \ldots +  \kappa_\mu H_\mu(z)$. Then we have $\xi =  \kappa_1  c_1 + \ldots +  \kappa_\mu c_\mu$, where $c_j$ is the coefficient of $e^z $ in $H_j(z)\in e^{ \rho_j z}{\rm NGA}\{1/z\}_1^{\Qbar}$. We have $c_j = 0$ if $\rho_j\neq 1$, and otherwise $c_j $ is the constant coefficient of $e^{-z}H_j(z)$: in both cases $c_j$ is an algebraic number. Therefore to conclude the proof that $\xi\in U$, it is enough to prove that $\kappa_1  ,  \ldots ,  \kappa_\mu \in U$. 

For simplicity let us prove that $\kappa_1\in U$. Given solutions $F_1,\ldots,F_\mu$ of $L$, we denote by $W( F_1,\ldots,F_\mu)$ the corresponding wronskian matrix. Then for any $z$ in a large sector
bisected by  $\theta$ we have 
$$ \kappa_1 = \frac{ \det W( F(z), H_{2,\theta}(z), \ldots,  H_{\mu,\theta}(z) )}{ \det W(   H_{1,\theta}(z), \ldots,  H_{\mu,\theta}(z) )}$$
where $H_{j,\theta}(z)$ is the 1-sommation of $H_j(z)$ in this sector. The determinant in the denominator belongs to $e^{a z} {\rm NGA}\{1/z\}_1^{\Qbar}$
with $a  = \rho_1+\ldots+\rho_\mu\in\Qbar$. As the proof of \cite[Theorem~6]{fuchsien} shows, there exist $b,c\in\Qbar$, with $c\neq 0$,  such that 
$$  \det W(   H_{1,\theta}(z), \ldots,  H_{\mu,\theta}(z) ) = c z^b e^{az} . $$
We take $z=1$, and choose $\theta=0$ if it is not anti-Stokes for $L$ (and $\theta>0$ sufficiently small otherwise). Then we have 
$$\kappa_1 = c^{-1} e^{-a} \Big(  \det W( F(z), H_{2,\theta}(z), \ldots,  H_{\mu,\theta}(z) ) \Big)_{| z=1}\in U.$$
This concludes the proof.
\end{proof}

\begin{Remark} The second part of Theorem \ref{theo:GpolyEantiE} suggests the following comments. It would be interesting to have a better understanding (in terms of {\bf E}, {\bf G} and {\bf D}) of the set of convergent integrals $\int_0^\infty R(x)F(x)dx$ where $R$ is a rational function in $\Qbar(x)$ and $F$ is an $E$-function, which are thus in {\bf G} when $R=1$ (see \cite{rivoalnote} for related considerations). Indeed, classical examples of such integrals are $\int_0^{+\infty} \frac{\cos(x)}{1+x^2} dx=\pi/(2e)  \in \pi {\bf E}$, Euler's constant $ \int_0^{+\infty} \frac{1-(1+x)e^{-x}}{x(1+x)} dx = \gamma \in {\bf E}+e^{-1}{\bf D}$ (using Eq.~\eqref{eq:gamma0701} and \cite[p.~248, Example~2]{WW}) and Gompertz constant $\delta:=\int_0^{+\infty} \frac{e^{-x}}{1+x} dx\in {\bf D}$. A large variety of behaviors can  thus be expected here. 

 For instance, using various explicit formulas in \cite[Chapters 6.5--6.7]{graryz}, it can be proved that
 $$
 \int_0^{+\infty} R(x)J_0(x)dx \in {\bf G}+ {\bf E}+ \gamma{\bf E}+\log(\Qbar^\star){\bf E}
 $$
 for any $R(x)\in \Qbar(x)$ without poles on $[0,+\infty)$, where $J_0(x)=\sum_{n=0}^{\infty} (ix/2)^{2n}/n!^2$ is a Bessel function.
 
A second class of examples is when $R(x)F(x)$ is an even function without poles on $[0,+\infty)$ and such that $\lim_{|x|\to \infty, \Im(x)\ge0} x^2 R(x)F(x)=0$. Then by the residue theorem,  
$$
\int_0^{+\infty} R(x)F(x)dx = i\pi\sum_{\rho, \, \Im(\rho)>0} \textup{Res}_{x=\rho}\big(R(x)F(x)\big)\in \pi {\bf E}
$$
where the summation is over the poles of $R$ in the upper half plane. 
\end{Remark}

\section{Derivatives of the $\Gamma $ function at rational points} \label{sec:consequences}

In this section we prove Theorem~\ref{prop3} and Proposition \ref{proppasdansE} stated in the introduction, dealing with $\Gamma^{(s)}(a)$. To begin with, we define $E$-functions $E_{a,s}(z)$ in \S \ref{ssec:conseq1} and prove a linear independence result concerning these functions. Then we prove in \S \ref{ssec:proofprop3} a formula for $\Gamma^{(s)}(a)$, namely Eq. \eqref{eq:gammasa}, involving $E_{a,s+1}(-1)$ and the 1-summation of an \antie-function. This enables us to prove Theorem~\ref{prop3} in \S \ref{subsec43} and Proposition \ref{proppasdansE} in \S \ref{subsec44}.

\subsection{Linear independence of a family of $E$-functions}\label{ssec:conseq1}

To study derivatives of the $\Gamma $ function at rational points, we need the following lemma. For $s\ge 1$ and  $a\in \mathbb Q\setminus \mathbb Z_{\leq 0}$, we consider the $E$-function $E_{a,s}(z):=\sum_{n=0}^\infty \frac{z^n}{n!(n+a)^s}$. 
\begin{lem}\label{lemmenew} 
$(i)$ For any $a\in \mathbb Q\setminus \mathbb Z$ and any $s\ge 1$,  the functions 
$$
1, e^z, e^zE_{a,1}(-z), e^zE_{a,2}(-z), \ldots, e^zE_{a,s}(-z)
$$ 
are linearly independent over $\mathbb C(z)$. 

\medskip

\noindent $(ii)$ For any $a\in \mathbb N^*$ and any $s\ge 2$,  the functions 
$$
1, e^z, e^zE_{a,2}(-z), \ldots, e^zE_{a,s}(-z)
$$ 
are linearly independent over $\mathbb C(z)$. 
\end{lem}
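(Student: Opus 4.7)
The plan is to reduce, via a change of variables and asymptotic expansions at $+\infty$, to a statement of $\C(z)$-linear independence for certain divergent formal Laurent series in $1/z$, which is then proved by induction on $s$.

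First, substituting $z\mapsto -z$ (an automorphism of $\C(z)$) and multiplying through by $e^z$, the claim in (i) becomes the $\C(z)$-linear independence of $1, e^z, E_{a,1}(z),\ldots, E_{a,s}(z)$, and similarly in (ii) with $E_{a,1}$ removed. Setting $E_{a,0}:=e^z$ and $D:=z\frac{d}{dz}+a$, one checks directly that $DE_{a,k}=E_{a,k-1}$. Put $\widehat E_k(z):=e^{-z}E_{a,k}(z)\in\C((1/z))$, viewed via its Poincar\'e asymptotic expansion at $z=+\infty$; then $\widehat E_0=1$ and $\widetilde D \widehat E_k=\widehat E_{k-1}$ where $\widetilde D:=z\frac{d}{dz}+(z+a)$. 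Starting from a relation $c_{-1}(z)+c_0(z)e^z+\sum_k c_k(z)E_{a,k}(z)=0$, dividing by $e^z$ and taking asymptotic expansions at $+\infty$ kills the flat term $c_{-1}(z)e^{-z}$ and produces
\[
c_0(z)+\sum_k c_k(z)\widehat E_k(z)=0\quad\text{in } \C((1/z)).
\]
It thus suffices to prove that $\{1,\widehat E_1,\ldots,\widehat E_s\}$ in case (i) (resp.\ $\{1,\widehat E_2,\ldots,\widehat E_s\}$ in case (ii)) is $\C(z)$-linearly independent in $\C((1/z))$; once $c_0=c_1=\cdots=c_s=0$, the residual relation $c_{-1}(z)e^{-z}=0$ forces $c_{-1}=0$.

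I then induct on $s$. The base cases ($s=1$ in (i), $s=2$ in (ii)) reduce to $\widehat E_1\notin\C(z)$ when $a\notin\Z$ and $\widehat E_2\notin\C(z)$ when $a\in\N^*$; both are verified by a direct computation of coefficients from $\widetilde D\widehat E_k=\widehat E_{k-1}$---for instance one finds $\widehat E_1(z)=\sum_{n\ge 1}(1-a)(2-a)\cdots(n-1-a)\,z^{-n}$, whose coefficients grow like $\Gamma(n-a)$, so the Laurent expansion at infinity of a rational function cannot match them. For the inductive step, assume $\widehat E_s=A_0+\sum_k A_k\widehat E_k$ with $A_i\in\C(z)$, the sum running over the relevant lower indices. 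Applying $\widetilde D$ and using the Leibniz-type identity $\widetilde D(c u)=zc'u+c\,\widetilde D u$ for $c\in\C(z)$ together with $\widetilde D \widehat E_k=\widehat E_{k-1}$, the right-hand side becomes a $\C(z)$-combination of $1$ and the $\widehat E_k$ with $k\le s-1$; since the left-hand side is $\widehat E_{s-1}$, the induction hypothesis forces the coefficients to match. In particular the coefficient of $\widehat E_{s-1}$ yields $zA'_{s-1}=1$, which has no solution in $\C(z)$ because $1/z$ admits no rational antiderivative---contradiction.

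The main step is the base case: one needs a small explicit computation to see that $\widehat E_1$ (resp.\ $\widehat E_2$) is a genuinely divergent Gevrey-1 series, hence not rational. Everything else is clean algebraic bookkeeping made possible by the first-order linearity of $\widetilde D$.
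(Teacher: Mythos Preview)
Your approach is correct and genuinely different from the paper's. The paper works directly with $\psi_j(z)=e^zE_{a,j}(-z)$ and the recursion $\psi_j'=(1-a/z)\psi_j+(1/z)\psi_{j-1}$: assuming a $\C(z)$-relation, it differentiates and matches coefficients, with a somewhat delicate base case (pole analysis for a putative rational $v$ solving $zv'=(z-a)v+1$). You instead pass to the Poincar\'e asymptotic expansions $\widehat E_k\in\C((1/z))$, which kills the term $c_{-1}e^{-z}$ and replaces the base case by the transparent observation that a series with factorially growing coefficients cannot be the Laurent expansion at $\infty$ of a rational function. The inductive mechanism is then the same in spirit (apply the first-order operator, read off the top coefficient $zA_{s-1}'=1$), but your formal setup makes it cleaner.

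Two small points deserve one extra line each. First, you should justify that $e^{-z}E_{a,k}(z)$ actually has an asymptotic expansion in $\C((1/z))$ (no $z^{-a}$ or $\log z$): the annihilating operator of $E_{a,k}$ is $(\frac{d}{dz}-1)(z\frac{d}{dz}+a)^k$, so the non-$e^z$ formal solutions at $\infty$ are $z^{-a}(\log z)^j$, which become exponentially small after multiplication by $e^{-z}$ and hence do not contribute to the Poincar\'e expansion. Second, in case~(ii) your inductive step applied to $A_2\widehat E_2$ produces a term $A_2\widehat E_1$, but $\widehat E_1$ is not among the functions covered by the induction hypothesis $\{1,\widehat E_2,\ldots,\widehat E_{s-1}\}$. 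This is harmless precisely because for $a\in\N^*$ the product $(1-a)(2-a)\cdots(n-1-a)$ vanishes for $n>a$, so $\widehat E_1\in\C(z)$ and can be absorbed into the constant term; you should say this explicitly before invoking the induction hypothesis to match coefficients.
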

\begin{Remark} Part $(i)$ of the lemma is false if $a\in \mathbb N^{*}$ because $1, e^z, e^zE_{a,1}(-z)$ are $\mathbb Q(z)$-linearly dependent in this case (see the proof of Part $(ii)$ below).
\end{Remark}

\begin{proof} $(i)$ For simplicity, we set  $\psi_s(z):=e^zE_{a,s}(-z)$. We proceed by induction on $s\ge 1$. Let us first prove the case $s=1$. The derivative of $\psi_1(z)$ is $(1+(z-a)\psi_1(z))/z$. Let us assume the existence of a relation $\psi_1(z)=u(z)e^z+v(z)$ with $u,v\in \mathbb C(z)$ (a putative relation $U(z)+V(z)e^z+W(z)\psi_1(z)=0$ forces $W\neq 0$ because $e^z\notin \mathbb C(z)$). Then after differentiation of both sides, we end up with 
$$
\frac{1+(z-a)\psi_1(z)}{z}=\big(u(z)+u'(z)\big)e^z+v'(z).
$$
Hence, 
$$
\frac{1+(z-a)\big(u(z)e^z+v(z)\big)}{z}=\big(u(z)+u'(z)\big)e^z+v'(z).
$$
Since $e^z\notin \mathbb C(z)$, the function $v(z)$ is a rational solution of the differential equation $zv'(z)=(z-a)v(z)+1$: $v(z)$ cannot be identically 0, and it cannot be a polynomial (the degrees do not match on both sides). It must then have a pole at some point $\omega$, of order $d\ge 1$ say. We must have $\omega=0$ because otherwise the order of the pole at $\omega$ of $zv'(z)$ is $d+1$ while the order of the pole of $(z-a)v(z)+1$ is at most $d$. Writing $v(z)=\sum_{n\ge -d} v_nz^n$ with $v_{-d}\neq 0$ and comparing the term in $z^{-d}$ of $zv'(z)$ and $(z-a)v(z)+1$, we obtain that $d=a$. This forces $a$ to be an integer $\ge 1$, which is excluded. Hence, $1, e^z, e^zE_{a,1}(-z)$ are $\mathbb C(z)$-linearly independent.
\medskip

Let us now assume that the case $s-1\ge 1$ holds. Let us assume the existence of a relation over $\mathbb C(z)$
\begin{equation} \label{eq:psisz}
    \psi_s(z)=v(z)+u_0(z)e^z+\sum_{j=1}^{s-1} u_j(z)\psi_j(z).
\end{equation}
(A putative relation $V(z)+U_0(z)e^z+\sum_{j=1}^{s}U_j(z)\psi_j(z)=0$ forces $U_s\neq 0$ by the induction hypothesis). Differentiating \eqref{eq:psisz} and because 
 $\psi_j'(z)=(1-\frac{a}{z})\psi_j(z)+\frac{1}{z}\psi_{j-1}(z)$
for all $j\ge 1$ (where we have let $\psi_0(z)=1 $), we have 
\begin{multline} \label{eq:psisz2}
  A(z)\psi_s(z)+\frac{1}{z}\psi_{s-1}(z) =
   v'(z)+\big(u_0(z)+u'_0(z)\big)e^z
   +\sum_{j=1}^{s-1}u'_j(z)\psi_j(z)\\
+\sum_{j=1}^{s-1}u_j(z)\big(A(z)\psi_j(z)+\frac{1}{z}\psi_{j-1}(z)\big), 
\end{multline}
where $A(z):=1-a/z$. 
Substituting the right-hand side of \eqref{eq:psisz} for $\psi_s(z)$ on the left-hand side of \eqref{eq:psisz2}, we then deduce that
\begin{multline*}
    v'(z)-A(z)v(z)+\big(u'_0(z)
    +(1-A(z)) u_0(z) \big)e^z\\
    +\frac{1}{z}(z-a)u_1(z)\psi_1(z)+\sum_{j=1}^{s-1}u'_j(z)\psi_{j}(z)+\frac{1}{z}\sum_{j=1}^{s-1} u_j(z)\psi_{j-1}(z)
-\frac{1}{z}\psi_{s-1}(z) =0.
\end{multline*}
This is  a  non-trivial $\mathbb C(z)$-linear relation between 
$ 1, e^z, \psi_1(z), \psi_2(z), \ldots, \psi_{s-1}(z) $ because the coefficient of $\psi_{s-1}(z)$ is $u_{s-1}'(z)-1/z$ and it is  not identically 0 because $u_{s-1}'(z)$ cannot have a pole of order $1$. But by the induction hypothesis, we know that such a relation is impossible. 

\medskip

$(ii)$ The proof can be done by induction on $s\ge 2$ similarily. In the case $s=2$, assume the existence of a relation $\psi_2(z)=u(z)e^z+v(z)$ with $u(z), v(z)\in \mathbb C(z)$. By differentiation, we obtain
$$
\Big(1-\frac{a}{z}\Big)\psi_2(z)=-\frac{1}{z}\psi_1(z)+\big(u(z)+u'(z)\big)e^z+v'(z).
$$
By induction on $a\ge 1$, we have $\psi_1(z)=(a-1)!e^z/z^{a}+w(z)$ for some $w(z)\in \mathbb Q(z)$. Hence, we have 
$$
\Big(1-\frac{a}{z}\Big)u(z)=-\Big(\frac{(a-1)!}{z^{a+1}}+1\Big)u(z)+u'(z)
$$
which is not possible. Let us now assume that the case $s-1\ge 2$ holds, as well as the existence of a relation over $\mathbb C(z)$
\begin{equation} \label{eq:psisznew}
    \psi_s(z)=v(z)+u_0(z)e^z+\sum_{j=2}^{s-1} u_j(z)\psi_j(z).
\end{equation}
We proceed exactly as above by differentiation of both sides of \eqref{eq:psisznew}. Using the relation  $\psi_j'(z)=(1-\frac{a}{z})\psi_j(z)+\frac{1}{z}\psi_{j-1}(z)$
for all $j\ge 2$ and the fact that $\psi_1(z)=(a-1)!e^z/z^{a}+w(z)$, we obtain a relation 
$
\widetilde{v}(z)+\widetilde{u}_0(z)e^z+\sum_{j=2}^{s-1} \widetilde{u}_j(z)\psi_j(z) =0 
$
where $\widetilde{u}_{s-1}(z)=u_{s-1}'(z)-1/z$ 
cannot be identically 0. The induction hypothesis rules out the existence of such a relation.
\end{proof}

\subsection{A formula for $ \Gamma^{(s)}(a)$}\label{ssec:proofprop3}

Let $z>0$ and $a\in \mathbb Q^+$, $a\neq 0$. We have 
$$
\Gamma^{(s)}(a)=\int_0^\infty t^{a-1} \log(t)^s e^{-t}dt
=\int_0^z t^{a-1} \log(t)^s e^{-t}dt+ \int_z^\infty t^{a-1} \log(t)^s e^{-t}dt.
$$
On the one hand,
\begin{align*}
\int_0^z t^{a-1} \log(t)^s e^{-t}dt 
&= \sum_{n=0}^\infty \frac{(-1)^n}{n!}\int_0^z t^{a+n-1} \log(t)^s dt
\\
&=  \sum_{n=0}^\infty \frac{(-1)^n}{n!} \sum_{k=0}^s (-1)^k\frac{s!}{(s-k)!} \frac{z^{n+a}\log(z)^{s-k}}{(n+a)^{k+1}}
\\
&=\sum_{k=0}^s \frac{(-1)^k s!}{(s-k)!} z^{a}\log(z)^{s-k}E_{a,k+1}(-z);
\end{align*}
recall that $E_{a,j}(z) = \sum_{n=0}^\infty \frac{z^n}{n!(n+a)^j}$.
On the other hand, 
\begin{align*}
\int_z^\infty t^{a-1} \log(t)^s e^{-t}dt&=
e^{-z}\int_0^\infty (t+z)^{a-1} \log(t+z)^s e^{-t} dt
\\
&=z^{a-1}e^{-z}\sum_{k=0}^s \binom{s}{k}\log(z)^{s-k}\int_0^\infty (1+t/z)^{a-1} \log(1+t/z)^k e^{-t} dt.
\end{align*}
Now $z>0$ so that
$$
\fgo_{a,k+1;0}(z):=\int_0^\infty (1+tz)^{a-1} \log(1+tz)^k e^{-t} dt 
 = \frac1z \int_0^\infty (1+x)^{a-1} \log(1+x)^k e^{-x/z} dx 
$$
is the $1$-summation  at the origin  in the direction 0 of the $\antie$-function  
$$
\sum_{n=0}^\infty n! u_{a,k,n} z^{n},
$$
where the sequence $(u_{a,k,n})_{n\ge 0}\in \mathbb Q^{\mathbb N}$ is defined by the expansion of the $G$-function:
$$
(1+x)^{a-1} \log(1+x)^k = \sum_{n=0}^\infty u_{a,k,n} x^n.
$$
Note that if $k=0$ and $a\in \mathbb N^*$, then $u_{a,k,n} =0 $ for any $n\geq a$, and  $\fgo_{a,k+1;0}(1/z)$ is a polynomial in $1/z$. Therefore, we have for any $z>0$: 
\begin{equation*}
    \Gamma^{(s)}(a) =\sum_{k=0}^s\frac{(-1)^k s!}{(s-k)!} z^{a}\log(z)^{s-k}E_{a,k+1}(-z)
     +z^{a-1}e^{-z}\sum_{k=0}^s \binom{s}{k}\log(z)^{s-k}\fgo_{a,k+1;0}(1/z).
\end{equation*}
In particular, for $z=1$, this relation reads 
\begin{equation} \label{eq:gammasa}
    \Gamma^{(s)}(a)=(-1)^s s! E_{a,s+1}(-1)+e^{-1}\fgo_{a,s+1;0}(1).
\end{equation}
Since $\gamma=-\Gamma'(1)$ we obtain as a special case the formula
\begin{equation} \label{eq44bis}
\gamma=  E_{1,2}(-1)-e^{-1}\fgo_{1,2;0}(1),
\end{equation}
which is also a special case of Eq. \eqref{eq:gamma0701} proved in \cite{Michigan}.

\subsection{Proof of Theorem~\ref{prop3}} \label{subsec43}

 Let us assume that $\Gamma^{(s)}(a) \in \Qbar$ for some $a\in \mathbb Q^+\setminus \mathbb N$ and $s\ge 0$. Then $e^z\Gamma^{(s)}(a)+(-1)^{s+1} s! e^zE_{a,s+1}(-z)$ is an $E$-function. The relation $e\Gamma^{(s)}(a)+(-1)^{s+1} s!  eE_{a,s+1}(-1)=\fgo_{a,s+1;0}(1)$ proved at the end of \S \ref{ssec:proofprop3} shows that $\alpha:=e\Gamma^{(s)}(a)+(-1)^{s+1} s! eE_{a,s+1}(-1)\in \EE \cap {\bf D}$. Hence $\alpha$ is in $\Qbar$ by Conjecture~\ref{conj3} and we have a non-trivial $\Qbar$-linear relation between $1, e$ and $eE_{a,s+1}(-1)$: we claim that this is not possible. Indeed, consider the vector $$Y(z):={}^t(1, e^z, e^zE_{a,1}(-z), \ldots, e^zE_{a,s+1}(-z)).$$ 
It is solution of a differential system $Y'(z)=M(z)Y(z)$ where 0 is the only pole of $M(z)\in M_{s+3}(\Qbar(z))$ (see the computations in the proof of Lemma \ref{lemmenew} above). Since the components of $Y(z)$ are $\Qbar(z)$-linearly independent by Lemma \ref{lemmenew}$(i)$, we deduce from Beukers' \cite[Corollary 1.4]{beukers} that $$1,\, e, \, eE_{a,1}(-1),\, \ldots, \,eE_{a,s+1}(-1)$$ are $\Qbar$-linearly independent, and in particular that $1, e$ and $eE_{a,s+1}(-1)$ are $\Qbar$-linearly independent. This concludes the proof if $a\in \mathbb Q^+\setminus \mathbb N$.

\medskip

Let us assume now  that $\Gamma^{(s)}(a)\in \Qbar$  for some $a\in \mathbb N^*$ and $s\ge 1$.  Then $e^z\Gamma^{(s)}(a)+(-1)^{s+1} s! e^zE_{a,s+1}(-z)$ is an $E$-function. The relation $\Gamma^{(s)}(a)+(-1)^{s+1} s! E_{a,s+1}(-1)=e^{-1}\fgo_{a,s+1;0}(1)$ shows that $\alpha:=e\Gamma^{(s)}(a)+(-1)^{s+1} s!e E_{a,s+1}(-1)\in \EE \cap {\bf D}$. Hence $\alpha$ is in $\Qbar$ by Conjecture \ref{conj3} and we have a non-trivial $\Qbar$-linear relation between $1, e$ and $eE_{a,s+1}(-1)$: we claim that this is not possible. Indeed, consider the vector $Y(z):={}^t(1, e^z, e^zE_{a,2}(-z), \ldots,$ $e^zE_{a,s+1}(-z))$: it is solution of a  differential system $Y'(z)=M(z)Y(z)$ where 0 is the only pole of $M(z)\in M_{s+2}(\Qbar(z))$. Since the components of $Y(z)$ are $\Qbar(z)$-linearly independent by Lemma \ref{lemmenew}$(ii)$, we deduce again from Beukers' theorem that $$1, \,e, \, eE_{a,2}(-1), \,\ldots, \, eE_{a,s+1}(-1)$$ are $\Qbar$-linearly independent, and in particular that $1, e$ and $eE_{a,s+1}(-1)$ are $\Qbar$-linearly independent.  This concludes the proof of Theorem~\ref{prop3}.
 
\subsection{Proof of Proposition~\ref{proppasdansE}}\label{subsec44}

 Recall that Eq. \eqref{eq44bis} proved in \S \ref{ssec:proofprop3} reads 
$
 eE_{1,2}(-1)- e\gamma=\fgo_{1,2;0}(1).
$
Assuming that $\gamma \in \EE$, the left-hand side is in $\EE$ while the right-hand side is in {\bf D}. Hence both sides are in $\Qbar$ by Conjecture~\ref{conj3}. Note that, by integration by parts,  
$$
\fgo_{1,2;0}(1) =\int_0^\infty \log(1+t)e^{-t}dt = \int_0^{\infty} \frac{e^{-t}}{1+t} dt
$$
is Gompertz's constant. 
Hence, by Corollary~\ref{coro:0701}  (which holds under Conjecture~\ref{conjantie}), the number $\fgo_{1,2;0}(1)$ is not in $\Qbar$. Consequently, $\gamma\notin \EE$.

\medskip

Similarly, Eq.  \eqref{eq:gammasa} with $a\in \mathbb Q\setminus \mathbb Z$ and $s=0$ reads 
$
e\Gamma(a)-eE_{a,1}(-1)=\fgo_{a,1;0}(1).$
Assuming that $\Gamma(a) \in \EE$, the left-hand side is in $\EE$ while the right-hand side is in {\bf D}. Hence both sides are in $\Qbar$ by Conjecture~\ref{conj3}. But by Corollary \ref{coro:0701} (which holds under Conjecture~\ref{conjantie}), the number $\fgo_{a,1;0}(1) = \int_0^\infty  (1+t)^{a-1} e^{-t}dt $ is not in $\Qbar$. Hence, $\Gamma(a)\notin \EE$.

\section{Application of Beukers' method and consequence} \label{sec5}

In this section we prove  Theorem \ref{theoantie} and Corollary \ref{coro:0701} stated in the introduction.

\subsection{Proof of Theorem \ref{theoantie}}\label{sec:proofsthm1}

The proof of Theorem \ref{theoantie} is based on the arguments given in \cite{beukers}, except that $E$-functions have to be replaced with \antie-functions, and 1-summation in non-anti-Stokes directions is used for evaluations. Conjecture \ref{conjantie} is used as a substitute for Theorem \ref{theoe}$(i)$.

The main step is the following result, the proof of which is analogous to the end of the proof of \cite[Corollary 2.2]{beukers}.

\begin{prop}\label{propintermed} 
Assume that Conjecture \ref{conjantie}  holds.

Let   $\fgo$ be an \antie-function,  $\xi\in\Qbar\etoile$ and $\theta\in (\arg(\xi)-\pi/2,\arg(\xi)+\pi/2)$. Assume that   $\theta$ is not anti-Stokes for   $\fgo$, and that $\fgo_\theta(1/\xi)=0$. Denote by $Ly=0$ a differential equation, of minimal order, satisfied by $\fgo(1/z)$. 

Then all solutions of $Ly=0$ are holomorphic and vanish at $\xi$; the differential operator $L$ has an apparent singularity at $\xi$.
\end{prop}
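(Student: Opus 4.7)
The plan is to adapt the approach of \cite[\S 3]{beukers}, with Conjecture~\ref{conjantie} playing the role of the $E$-function divisibility lemma (Theorem~\ref{theoe}$(i)$). The argument proceeds in three steps.

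\textbf{Step 1: applying the conjecture after a change of variable.} First I would massage the hypothesis $\fgo_\theta(1/\xi)=0$ into the form required by Conjecture~\ref{conjantie}. The series $\tilde\fgo(w):=\fgo(w/\xi)$ is again an \antie-function, and with $\theta':=\theta-\arg(\xi)\in(-\pi/2,\pi/2)$ the $1$-summation tracks as $\tilde\fgo_{\theta'}(1)=\fgo_\theta(1/\xi)=0$, the non-anti-Stokes condition transporting under this rotation. Conjecture~\ref{conjantie} then gives that $\tilde\fgo(w)/(w-1)$ is an \antie-function; undoing the substitution via $w=\xi z$ (the class of \antie-functions is stable under substitution by a nonzero algebraic scalar) yields an \antie-function
\[
g(1/z):=\frac{z\,\fgo(1/z)}{z-\xi},
\]
viewed as a formal series in $1/z$, with $\fgo(1/z)=\tfrac{z-\xi}{z}\,g(1/z)$ formally and $\fgo_\theta(1/z)=\tfrac{z-\xi}{z}\,g_\theta(1/z)$ throughout the sector.

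\textbf{Step 2: comparing minimal annihilators.} Let $L_g\in\Qbar(z)[d/dz]$ be the minimal-order operator annihilating $g(1/z)$. Because $g$ is an \antie-function, $L_g$ is an $E$-operator in André's sense, and the same holds for $L$. Write $\tilde L$ for the composition of $L$ with the multiplication operator $y\mapsto\tfrac{z-\xi}{z}\,y$: one has $\tilde L\,g(1/z)=L(\fgo(1/z))=0$ and $\mathrm{ord}(\tilde L)=\mu$. Conversely the operator $y\mapsto L_g\!\bigl(\tfrac{z}{z-\xi}\,y\bigr)$ has order $\mathrm{ord}(L_g)$ and annihilates $\fgo(1/z)$, so by minimality of $L$ we have $\mathrm{ord}(L_g)\geq\mu$. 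Hence $\mathrm{ord}(L_g)=\mu$ and $L_g$ coincides with $\tilde L$ up to a left $\Qbar(z)$-scalar; in particular the space of local solutions of $L_g$ near $\xi$ is exactly $\bigl\{\tfrac{z}{z-\xi}\,y:y\in\ker L\bigr\}$.

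\textbf{Step 3: concluding via André's theorem.} André's structural theorem asserts that an $E$-operator has only $\{0,\infty\}$ as non-apparent singularities. Applied to $L$, it shows that $\xi\in\Qbar^*$ is either a regular point or an apparent singularity of $L$, so every local solution of $L$ is holomorphic at $\xi$. Applied to $L_g$, it similarly forces every local solution of $L_g$ to be holomorphic at $\xi$. But for $y\in\ker L$ (holomorphic at $\xi$ by what precedes), the function $\tfrac{z}{z-\xi}\,y$ is holomorphic at $\xi$ if and only if $y(\xi)=0$, since $z/(z-\xi)$ has a simple pole there of nonzero residue. Therefore every $y\in\ker L$ vanishes at $\xi$; since at a regular point the solution space would admit arbitrary prescribed values, this forces $\xi$ to be a genuine singular point of $L$, and hence an apparent one.

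The main obstacle I expect is Step~2: establishing that the minimal annihilator $L_g$ is a genuine $E$-operator, and identifying it with the conjugate $\tilde L$ via the double application of minimality, so that André's apparentness theorem can be invoked on both $L$ and $L_g$. Step~1 then amounts to careful bookkeeping about how $1$-summation, the direction $\theta$, and the anti-Stokes set transform under dilation by $\xi$, while Step~3 is a local residue computation once the structural input from André is in place.
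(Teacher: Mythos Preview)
Your proposal is correct and follows exactly the route the paper indicates: the paper does not spell out a proof but simply says it is ``analogous to the end of the proof of \cite[Corollary~2.2]{beukers}'', with Conjecture~\ref{conjantie} replacing Theorem~\ref{theoe}$(i)$. Your three steps---rescale to $\xi=1$ and invoke Conjecture~\ref{conjantie}, compare the minimal operators of $\fgo(1/z)$ and of $g(1/z)=\frac{z}{z-\xi}\fgo(1/z)$ via the conjugation $y\mapsto\frac{z-\xi}{z}y$, then apply Andr\'e's theorem that $E$-operators have only $0$ and $\infty$ as non-apparent singularities to both operators---are precisely Beukers' argument transported to the \antie-setting.
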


To deduce  Theorem \ref{theoantie} from Proposition \ref{propintermed}, it is enough to follow \cite[\S 3]{beukers}.

\subsection{Proof of Corollary \ref{coro:0701}}\label{ssec:proofcoro1}

Let $s\in \Q\setminus \mathbb Z_{\ge 0}$.
The \tantie-function $\fgo(z):=\sum_{n=0}^\infty  s(s-1)\ldots (s-n+1) z^n$ is solution of the inhomogeneous differential equation $z^2\fgo'(z)+(1-sz)\fgo(z)-1=0$, which can be immediately transformed into a differential system satisfied by the vector of \tantie-functions ${}^t(1, \fgo(z))$.  The coefficients of the matrix have only 0 as pole. Moreover, $\fgo(z)$ is a transcendental function because $s\notin \mathbb Z_{\ge 0}$. Hence, by Theorem~\ref{theoantie}, $\fgo_0(1/\alpha) \notin\Qbar$ when  
$\alpha\in \Qbar$, $\alpha>0$, because $0$ is not an anti-Stokes direction of $\fgo(z)$. It remains to observe that this 1-sommation is 
$$
\int_0^\infty (1+tz)^s e^{-t}dt.
$$

\medskip

\noindent S. Fischler,  Universit\'e Paris-Saclay,  CNRS, Laboratoire de math\'ematiques d'Orsay,  91405 Orsay, France.

\medskip

\noindent T. Rivoal, Universit\'e Grenoble Alpes, CNRS, Institut Fourier, CS 40700, 38058 Grenoble cedex 9, France.

\bigskip

\noindent Keywords: $E$-functions, \antie-functions, $G$-functions, Gamma function, Siegel-Shidlovskii Theorem.
\medskip

\noindent MSC 2020: 11J91 (Primary), 33B15 (Secondary)


\begin{thebibliography}{1}

\bibitem{andre} Y. Andr\'e,  
S\'eries Gevrey de type arithm\'etique I.
Th\'eor\`emes de puret\'e et de dualit\'e, {\em Annals of Math.} {\bf 151} (2000),   705--740.

\bibitem{andre2} Y. Andr\'e,  
{S\'eries Gevrey de type arithm\'etique II.
Transcendance sans transcendance}, {\em  Annals of Math.} {\bf 151} (2000),   741--756.

\bibitem{Andrebordeaux} Y. Andr\'e,  
{Arithmetic  Gevrey series and   transcendence. A survey}, {\em  J. Théor. Nombres Bordeaux} {\bf 15} (2003),   1--10.


\bibitem{BB} D. Bertrand, F. Beukers, {\'Equations diff\'erentielles lin\'eaires et majorations de multiplicit\'es}, 
{\em Annales scientifiques ENS}  {\bf 18}.1 (1985), 181--192.

\bibitem{BCY} D. Bertrand, V. Chirskii, J. Yebbou, 
{ Effective estimates for global relations on Euler-type series},  
{\em  Annales de la Facult\'e des sciences de Toulouse : Math\'ematiques}, S\'er. 6, {\bf 13}.2 (2004),  241--260. 

\bibitem{beukers} F.~Beukers, { A refined version of the Siegel-Shidlovskii theorem}, {\em  Annals of Math.} {\bf 163} (2006), 369--379. 

\bibitem{ferguson} T. Ferguson, {Algebraic properties of \antie-functions}, 
{\em J. Number Theory}  {\bf 229} (2021), 168--178.  

\bibitem{gvalues} S. Fischler, T. Rivoal, On the values of $G$-functions, {\em Commentarii Math. Helv.} {\bf 89}.2 (2014), 313--341.

\bibitem{ateo} S. Fischler, T. Rivoal, {Arithmetic theory of E-operators}, {\em Journal de l'\'Ecole polytechnique -- Math\'ematiques} {\bf 3} (2016), 31--65. 

\bibitem{fuchsien} S. Fischler, T. Rivoal, {Microsolutions of differential operators and values of arithmetic Gevrey series},  {\em  American J. of Math.}
{\bf 140}.2 (2018), 317--348.

\bibitem{probsiegel} S. Fischler, T. Rivoal, { On Siegel's problem for $E$-functions}, preprint arxiv 1910.06817 [math.NT], {\em  Rendiconti Sem. Math. Univ. Padova}, to appear.

\bibitem{zucker} M. L. Glasser, I. J. Zucker, Extended Watson integrals for the cubic lattices, {\em Proc. Nat. Acad. Sci. U.S.A.} {\bf 74}.5 (1977), 1800–-1801.

\bibitem{graryz} I. S. Gradshteyn, I. M. Ryzhik, {\em Table of Integrals, Series, and Products}, translated from the Russian, edited by A. Jeffrey and D. Zwillinger, Amsterdam: Elsevier/Academic Press, 7th edition (english), 1176 pp., 2007.

\bibitem{Ramis}
 J.~P. Ramis, {\em S\'eries divergentes et th\'eories
  asymptotiques}, Panoramas et Synth\`eses, no.~21, Soc. Math. France, Paris,
  1993.

\bibitem{Michigan} T. Rivoal, { On the arithmetic nature of the values of the Gamma function, Euler's constant et Gompertz's constant}, {\em  Michigan Math. Journal} {\bf 61} (2012),  239--254.

\bibitem{rivoalnote} T. Rivoal,  Is Euler's constant a value of an arithmetic special function?, unpublished note (2017), 10 pages, {\tt https://hal.archives-ouvertes.fr/hal-01619235}

\bibitem{shid59} A. B. Shidlovskii, {On a criterion for algebraic independence of the values of a class of integral functions}, {\em Izvestia Akad. Nauk SSSR} {\bf 23} (1959), 35--66.


\bibitem{sieg29} C. L. Siegel, {\em \"Uber einige Anwendungen diophantischer Approximationen}, vol. 1 S. Abhand\-lungen Akad., Berlin, 1929.

\bibitem{sieg} C. L. Siegel, {\em Transcendental Numbers}, Annals of Mathematical Studies {\bf 16}, Princeton University Press, 1949.

\bibitem{WW} E. T. Whittaker, G. N. Watson, {\em A course of Modern Analysis}, 4th edition, Cambridge Mathematical Library, 1996.

\end{thebibliography}
\end{document}